\def\imod#1{\allowbreak\mkern10mu({\operator@font mod}\,\,#1)}
\newtheorem{theorem}{Theorem}[section]
\newtheorem{fact}{Fact}[section]
\newtheorem{conjecture}{Conjecture}[section]
\theoremstyle{definition}
\newtheorem{definition}{Definition}[section]
\newtheorem{remark}{Remark}[section]
\begin{document}
\begin{center}
\vskip 1cm{\LARGE\bf On 2-powerfully Perfect Numbers in Three Quadratic Rings
\vskip 1cm
\large
Colin Defant\footnote{This work was supported by National Science Foundation grant no. 1262930.}\\
Department of Mathematics\\
University of Florida\\
United States\\
cdefant@ufl.edu}
\end{center}
\vskip .2 in

\begin{abstract}
Using an extension of the abundancy index to imaginary quadratic rings with unique factorization, we define what we call $n$-powerfully perfect numbers in these rings. This definition serves to extend the concept of perfect numbers that have been defined and studied in the integers. We investigate the properties of $2$-powerfully perfect numbers in the rings $\mathcal O_{\mathbb{Q}(\sqrt{-1})}$, $\mathcal O_{\mathbb{Q}(\sqrt{-2})}$, and $\mathcal O_{\mathbb{Q}(\sqrt{-7})}$, the three imaginary quadratic rings with unique factorization in which $2$ is not a prime.  
\end{abstract}
\bigskip
\noindent 2010 {\it Mathematics Subject Classification}:  Primary 11R11; Secondary 11N80.

\noindent \emph{Keywords: } Abundancy index, quadratic ring, solitary number, perfect number.
\section{Introduction} 

Throughout this paper, we will let $\mathbb{N}$ denote the set of positive integers, and we will let $\mathbb{N}_0$ denote the set of nonnegative integers.
\par 
The arithmetic functions $\sigma_k$ are defined, for every integer $k$, by \\ $\displaystyle{\sigma_k(n)=\sum_{\substack{c\vert n\\c>0}}c^k}$. For each integer $k\neq 0$, $\sigma_k$ is multiplicative and satisfies \\ $\displaystyle{\sigma_k (p^\alpha)=\frac{p^{k(\alpha+1)}-1}{p^k-1}}$ for all (integer) primes $p$ and positive integers $\alpha$. The abundancy index of a positive integer $n$ is defined by $\displaystyle{I(n)=\frac{\sigma_1(n)}{n}}$. A positive integer $n$ is said to be $t$-perfect if $I(n)=t$ for a positive integer $t\geq 2$, and $2$-perfect numbers are called perfect numbers. 
\par 
For any square-free integer $d$, let $\mathcal O_{\mathbb{Q}(\sqrt{d})}$ be the quadratic integer ring given by \[\mathcal O_{\mathbb{Q}(\sqrt{d})}=\begin{cases} \mathbb{Z}[\frac{1+\sqrt{d}}{2}], & \mbox{if } d\equiv 1\imod{4}; \\ \mathbb{Z}[\sqrt{d}], & \mbox{if } d\equiv 2, 3 \imod{4}. \end{cases}\] 
\par 
Throughout the remainder of this paper, we will work in the rings $\mathcal O_{\mathbb{Q}(\sqrt{d})}$ for different specific or arbitrary values of $d$. We will use the symbol ``$\vert$" to mean ``divides" in the ring $\mathcal O_{\mathbb{Q}(\sqrt{d})}$ in which we are working. 
Whenever we are working in a ring other than $\mathbb{Z}$, we will make sure to emphasize when we wish to state that one integer divides another in $\mathbb{Z}$. 
For example, if we are working in $\mathbb{Z}[i]$, the ring of Gaussian integers, we might say that $1+i\vert 1+3i$ and that $2\vert 6$ in $\mathbb{Z}$. We will also refer to primes in $\mathcal O_{\mathbb{Q}(\sqrt{d})}$ as ``primes," whereas we will refer to (positive) primes in $\mathbb{Z}$ as ``integer primes." For an integer prime $p$ and a nonzero integer $n$, we will let $\upsilon_p(n)$ denote the largest integer $k$ such that $p^k\vert n$ in $\mathbb{Z}$. For a prime $\pi$ and a nonzero number $x\!\in\!\mathcal O_{\mathbb{Q}(\sqrt{d})}$, we will let $\rho_\pi(x)$ denote the largest integer $k$ such that $\pi^k\vert x$.  
Furthermore, we will henceforth focus exclusively on values of $d$ for which $\mathcal O_{\mathbb{Q}(\sqrt{d})}$ is a unique factorization domain and $d<0$. In other words, $d\in K$, where we will define $K$ to be the set $\{-163,-67,-43,-19,-11,-7,-3,-2,-1\}$. The set $K$ is known to be the complete set of negative values of $d$ for which $\mathcal O_{\mathbb{Q}(\sqrt{d})}$ is a unique factorization domain \cite{Stark67}.  
\par 
For an element $a+b\sqrt{d}\in\mathcal O_{\mathbb{Q}(\sqrt{d})}$ with $a,b\in \mathbb{Q}$, we define the conjugate by $\overline{a+b\sqrt{d}}=a-b\sqrt{d}$. The norm and absolute value of an element $z$ are defined, respectively, by $N(z)=z\overline{z}$ and $\vert z\vert=\sqrt{N(z)}$. We assume familiarity with the properties of these object, which are treated in Keith Conrad's online notes \cite{Conrad}. For $x,y\in\mathcal O_{\mathbb{Q}(\sqrt{d})}$, we say that $x$ and $y$ are associated, denoted $x\sim y$, if and only if $x=uy$ for some unit $u$ in the ring $\mathcal O_{\mathbb{Q}(\sqrt{d})}$. Furthermore, we will make repeated use of the following well-known facts. 
\begin{fact} \label{Fact1.1}
Let $d\!\in\! K$. If $p$ is an integer prime, then exactly one of the following is true. 
\begin{itemize}
\item $p$ is also a prime in $\mathcal O_{\mathbb{Q}(\sqrt{d})}$. In this case, we say that $p$ is inert in $\mathcal O_{\mathbb{Q}(\sqrt{d})}$. 
\item $p\sim \pi^2$ and $\pi\sim\overline{\pi}$ for some prime $\pi\in \mathcal O_{\mathbb{Q}(\sqrt{d})}$. In this case, we say $p$ ramifies (or $p$ is ramified) in $\mathcal O_{\mathbb{Q}(\sqrt{d})}$. 
\item $p=\pi\overline{\pi}$ and $\pi\not\sim\overline{\pi}$ for some prime $\pi\in\mathcal O_{\mathbb{Q}(\sqrt{d})}$. In this case, we say $p$ splits (or $p$ is split) in $\mathcal O_{\mathbb{Q}(\sqrt{d})}$.
\end{itemize}
\end{fact}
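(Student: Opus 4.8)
The plan is to run everything through the norm map together with unique factorization. I will use four standing facts about $\mathcal O_{\mathbb{Q}(\sqrt{d})}$ for $d\in K$: the norm $N$ is multiplicative; because $d<0$, $N(z)$ is a nonnegative rational integer with $N(z)=1$ precisely when $z$ is a unit (and $N(z)=0$ only for $z=0$), so every nonzero non-unit has norm at least $2$; conjugation $z\mapsto\overline z$ is a ring automorphism, hence sends primes to primes and respects $\sim$; and, since the integer prime $p$ satisfies $\overline p=p$, we have $N(p)=p\overline p=p^2$. These hold uniformly in $d$, so a single argument covers all three rings of interest.

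Because $p$ is a nonzero non-unit and the ring is a UFD, I would fix a prime $\pi$ with $\pi\mid p$. Writing $p=\pi\gamma$ and taking norms gives $p^2=N(\pi)N(\gamma)$, so $N(\pi)\mid p^2$ in $\mathbb Z$; as $N(\pi)\geq 2$, the only possibilities are $N(\pi)=p^2$ and $N(\pi)=p$. The whole statement then follows by analyzing these two cases. In the first case $N(\pi)=p^2=N(p)$, so $N(\gamma)=1$, i.e.\ $\gamma$ is a unit and $p\sim\pi$; thus $p$ is itself prime and is inert. In the second case $N(\pi)=p$ gives the exact equality $p=\pi\overline\pi$, and $\overline\pi$ is again prime. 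Here I would branch on whether $\pi\sim\overline\pi$: if $\pi\sim\overline\pi$, then writing $\overline\pi=u\pi$ for a unit $u$ turns $p=\pi\overline\pi$ into $p=u\pi^2\sim\pi^2$, giving the ramified case, while if $\pi\not\sim\overline\pi$, then $p=\pi\overline\pi$ is already the split case.

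What remains — and the only part needing genuine care — is to check that the three outcomes are mutually exclusive, so that exactly one holds; this is where unique factorization is essential. The inert case says $p$ is irreducible, whereas the ramified and split cases make $p$ reducible, so inert is incompatible with the other two. The ramified case presents $p$ as an associate of $\pi^2$ (one prime, appearing twice), while the split case presents $p$ as a product of two non-associate primes; by uniqueness of the prime factorization of $p$ up to order and units, these two descriptions cannot both hold. The main bookkeeping hazard throughout is keeping straight which equalities are exact (those coming directly from $N(\pi)=\pi\overline\pi$) and which hold only up to associates, but no deeper difficulty arises.
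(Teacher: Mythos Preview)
Your argument is correct: the norm dichotomy $N(\pi)\in\{p,p^2\}$ cleanly separates the inert case from the other two, the identity $p=N(\pi)=\pi\overline\pi$ handles the remaining branch, and unique factorization gives mutual exclusivity. There is nothing to compare against, however, because the paper does not prove Fact~\ref{Fact1.1}; it is stated without proof as one of several ``well-known facts'' and the reader is referred to Conrad's notes \cite{Conrad} for background on factoring in quadratic fields. Your write-up is thus a self-contained proof of a result the paper simply quotes.
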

\begin{fact} \label{Fact1.2}
Let $d\!\in\! K$. If $\pi\!\in\!\mathcal O_{\mathbb{Q}(\sqrt{d})}$ is a prime, then exactly one of the following is true. 
\begin{itemize}
\item $\pi\sim q$ and $N(\pi)=q^2$ for some inert integer prime $q$. 
\item $\pi\sim\overline{\pi}$ and $N(\pi)=p$ for some ramified integer prime $p$. 
\item $\pi\not\sim\overline{\pi}$ and $N(\pi)=N(\overline{\pi})=p$ for some split integer prime $p$. 
\end{itemize}
\end{fact}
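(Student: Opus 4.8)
The plan is to reduce the classification of $\pi$ to the classification of a single integer prime lying below it, and then invoke Fact~\ref{Fact1.1}. First I would show that every prime $\pi\in\mathcal O_{\mathbb{Q}(\sqrt{d})}$ divides exactly one integer prime. For existence, note that since $\pi$ is not a unit, $N(\pi)=\pi\overline{\pi}$ is an integer with $N(\pi)\geq 2$; factoring $N(\pi)=p_1\cdots p_k$ into integer primes and using that $\pi$ is prime in the unique factorization domain, one gets $\pi\vert p_i$ for some $i$, so $\pi$ divides the integer prime $p:=p_i$. For uniqueness, if $\pi$ divided two distinct integer primes $p$ and $p'$, then coprimality of $p$ and $p'$ in $\mathbb{Z}$ would give integers $a,b$ with $ap+bp'=1$, forcing $\pi\vert 1$, a contradiction.

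Next I would run a case analysis on the behavior of this integer prime $p$ according to Fact~\ref{Fact1.1}. If $p$ is inert, then $p$ is itself a prime of $\mathcal O_{\mathbb{Q}(\sqrt{d})}$; since $\pi\vert p$ and the ring is a unique factorization domain, $\pi\sim p$, and then $N(\pi)=N(p)=p^2$, placing us in the first case with $q=p$. If $p$ ramifies, write $p\sim\pi_0^2$ with $\pi_0\sim\overline{\pi_0}$; then $\pi\vert\pi_0^2$ forces $\pi\sim\pi_0$, whence $\pi\sim\overline{\pi}$, and taking norms in $p\sim\pi_0^2$ gives $N(\pi)=N(\pi_0)=p$, the second case. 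If $p$ splits, write $p=\pi_0\overline{\pi_0}$ with $\pi_0\not\sim\overline{\pi_0}$; then $\pi$ divides one of $\pi_0,\overline{\pi_0}$ and is therefore associated to it, giving $N(\pi)=p$, while an associativity argument shows that $\pi\sim\overline{\pi}$ would force $\pi_0\sim\overline{\pi_0}$, so in fact $\pi\not\sim\overline{\pi}$, the third case.

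Finally I would check mutual exclusivity so that ``exactly one'' genuinely holds. The first case is separated from the other two by the value of the norm: $N(\pi)$ is the square of an integer prime in the inert case but is an integer prime in the ramified and split cases. The ramified and split cases are then separated by the conjugacy condition, since $\pi\sim\overline{\pi}$ holds in the former and fails in the latter.

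The part requiring the most care is the reduction step together with the repeated use of the unique factorization hypothesis: one must argue cleanly that divisibility of an integer prime (or of $\pi_0^2$, or of $\pi_0\overline{\pi_0}$) by the prime $\pi$ pins $\pi$ down up to associates. The conjugacy bookkeeping in the split case -- propagating $\pi\sim\pi_0$ (or $\pi\sim\overline{\pi_0}$) to a statement comparing $\pi$ with $\overline{\pi}$ -- is the most error-prone piece, but it is routine once one recalls that conjugation preserves both the associate relation and the norm.
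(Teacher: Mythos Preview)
Your argument is correct: reducing to the unique integer prime $p$ below $\pi$ and then invoking Fact~\ref{Fact1.1} is the standard route, and your handling of each case (including the conjugacy bookkeeping in the split case and the mutual exclusivity check) is clean. Note, however, that the paper does not actually prove Fact~\ref{Fact1.2}; it is listed among the ``well-known facts'' immediately after Fact~\ref{Fact1.1} and is used without justification, so there is no in-paper proof to compare against.
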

\begin{fact} \label{Fact1.3}
If $d\!\in K\!$, $q$ is an integer prime that is inert in $\mathcal O_{\mathbb{Q}(\sqrt{d})}$, and $x\in\mathcal O_{\mathbb{Q}(\sqrt{d})}\backslash\{0\}$, then $\upsilon_q(N(x))$ is even and $\rho_q(x)=\frac{1}{2}\upsilon_q(N(x))$.
\end{fact}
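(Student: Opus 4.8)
The plan is to reduce the whole statement to a single key claim: for an inert integer prime $q$ and a nonzero $y\in\mathcal O_{\mathbb{Q}(\sqrt{d})}$, one has $q\mid y$ in the ring if and only if $q\mid N(y)$ in $\mathbb{Z}$. Granting this, the proof is short. Writing $m=\rho_q(x)$, the unique factorization property of $\mathcal O_{\mathbb{Q}(\sqrt{d})}$ lets me factor $x=q^m y$ with $q\nmid y$. Since the norm is multiplicative and $N(q)=q^2$ (because $q$ is a rational integer, in agreement with Fact \ref{Fact1.2} for the inert case), I obtain $N(x)=q^{2m}N(y)$. By the key claim, $q\nmid y$ forces $q\nmid N(y)$ in $\mathbb{Z}$, so $q^{2m}$ is exactly the power of $q$ dividing $N(x)$. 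Hence $\upsilon_q(N(x))=2m$, which is even, and $\rho_q(x)=m=\frac{1}{2}\upsilon_q(N(x))$, as desired.

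It then remains to establish the key claim. The forward direction is immediate: if $q\mid y$, then $q^2=N(q)\mid N(y)$, so in particular $q\mid N(y)$ in $\mathbb{Z}$. For the reverse direction, suppose $q\mid N(y)$ in $\mathbb{Z}$. Since $N(y)=y\overline{y}$ and $\mathbb{Z}\subseteq\mathcal O_{\mathbb{Q}(\sqrt{d})}$, this gives $q\mid y\overline{y}$ in the ring. Because $q$ is inert, it is a prime element of $\mathcal O_{\mathbb{Q}(\sqrt{d})}$, so $q\mid y$ or $q\mid\overline{y}$. In the latter case I apply the conjugation automorphism $z\mapsto\overline{z}$, which fixes $\mathbb{Z}$: since $\overline{q}=q$, the divisibility $q\mid\overline{y}$ yields $q=\overline{q}\mid\overline{\overline{y}}=y$. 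Either way $q\mid y$, which proves the claim.

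The only place requiring genuine care is the conjugation step in the reverse direction, since at first glance $q\mid\overline{y}$ does not obviously recover information about $y$. What makes it work is precisely that $q$ is a rational integer and hence self-conjugate, so applying the ring automorphism to $q\mid\overline{y}$ returns $q\mid y$. I expect no further difficulty: the remaining ingredients are the multiplicativity of the norm, the value $N(q)=q^2$, and the defining property of a prime element in a unique factorization domain.
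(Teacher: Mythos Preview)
Your proof is correct; the reduction to the key claim and the conjugation argument are both sound. The paper itself gives no proof of Fact~\ref{Fact1.3}---it is listed among several ``well-known facts'' stated without justification---so there is no approach to compare against.
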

\begin{fact} \label{Fact1.4}
Let $p$ be an odd integer prime. Then $p$ ramifies in $\mathcal O_{\mathbb{Q}(\sqrt{d})}$ if and only if $p\vert d$ in $\mathbb{Z}$. If $p\nmid d$ in $\mathbb{Z}$, then $p$ splits in $\mathcal O_{\mathbb{Q}(\sqrt{d})}$ if and only if $d$ is a quadratic residue modulo $p$. Note that this implies that $p$ is inert in $\mathcal O_{\mathbb{Q}(\sqrt{d})}$ if and only if $p\nmid d$ in $\mathbb{Z}$ and $d$ is a quadratic nonresidue modulo $p$. 
Also, the integer prime $2$ ramifies in $\mathcal O_{\mathbb{Q}(\sqrt{-1})}$ and $\mathcal O_{\mathbb{Q}(\sqrt{-2})}$, splits in $\mathcal O_{\mathbb{Q}(\sqrt{-7})}$, and is inert in $\mathcal O_{\mathbb{Q}(\sqrt{d})}$ for all $d\in K\backslash\{-1,-2,-7\}$.
\end{fact}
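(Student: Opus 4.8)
The plan is to reduce all three possible behaviors of an odd integer prime, as well as the behavior of $2$, to the factorization of a single quadratic polynomial over a finite field. Since $d\in K$, the ring $\mathcal O_{\mathbb{Q}(\sqrt{d})}$ is monogenic: writing $\theta=\sqrt{d}$ when $d\equiv 2,3\imod{4}$ and $\theta=\frac{1+\sqrt{d}}{2}$ when $d\equiv 1\imod{4}$, we have $\mathcal O_{\mathbb{Q}(\sqrt{d})}=\mathbb{Z}[\theta]\cong\mathbb{Z}[x]/(f(x))$, where $f(x)=x^2-d$ in the first case and $f(x)=x^2-x+\frac{1-d}{4}$ in the second. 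Reducing modulo an integer prime $p$ then gives $\mathcal O_{\mathbb{Q}(\sqrt{d})}/(p)\cong\mathbb{F}_p[x]/(\overline{f}(x))$. First I would match the three cases of Fact \ref{Fact1.1} to the three ways $\overline{f}$ can factor over $\mathbb{F}_p$: irreducibility of $\overline{f}$ corresponds to $(p)$ remaining prime (inert), a splitting of $\overline{f}$ into two distinct monic linear factors corresponds to $p=\pi\overline{\pi}$ with $\pi\not\sim\overline{\pi}$ (split), and $\overline{f}$ being the square of a linear factor corresponds to $p\sim\pi^2$ with $\pi\sim\overline{\pi}$ (ramified).

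For odd $p$ the analysis of $\overline{f}$ is uniform because $2$ is invertible in $\mathbb{F}_p$. The discriminant of $f$ equals $4d$ in the first case and $d$ in the second, and in either case $\overline{f}$ has a repeated root precisely when $p$ divides this discriminant, that is, precisely when $p\mid d$ in $\mathbb{Z}$; this yields the ramification criterion. When $p\nmid d$, the quadratic formula (valid since $2$ is a unit) shows that $\overline{f}$ has a root in $\mathbb{F}_p$ if and only if $d$ is a square modulo $p$, and in that case the two roots are distinct. Hence $p$ splits if and only if $d$ is a quadratic residue modulo $p$ and is inert if and only if $d$ is a quadratic nonresidue, and the stated consequence for inert primes follows at once.

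For $p=2$ I would simply factor $\overline{f}$ over $\mathbb{F}_2$ in each relevant case. For $d=-1$ and $d=-2$ one obtains $x^2+1\equiv(x+1)^2$ and $x^2$, so $2$ ramifies; for $d=-7\equiv 1\imod{4}$ one obtains $x^2-x+2\equiv x(x+1)$, so $2$ splits. Each remaining $d\in K\setminus\{-1,-2,-7\}$ is congruent to $5\imod{8}$, so $\frac{1-d}{4}$ is odd and $\overline{f}\equiv x^2+x+1$, which is irreducible over $\mathbb{F}_2$; thus $2$ is inert. The one genuinely nonroutine step is the dictionary invoked in the first paragraph, namely the verification that the factorization type of $\overline{f}$ over $\mathbb{F}_p$ matches the inert/split/ramified trichotomy of Fact \ref{Fact1.1}. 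I expect this to be the main obstacle: it rests on the isomorphism $\mathcal O_{\mathbb{Q}(\sqrt{d})}/(p)\cong\mathbb{F}_p[x]/(\overline{f})$ (equivalently, Dedekind's factorization theorem, available here because $\mathcal O_{\mathbb{Q}(\sqrt{d})}$ is monogenic) together with the norm bookkeeping of Fact \ref{Fact1.2} to pin down when the prime lying above $p$ is self-associate. Once this correspondence is established, all the remaining assertions reduce to the short computations above.
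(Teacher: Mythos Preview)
Your argument is correct. The paper itself does not prove Fact~\ref{Fact1.4}; it is listed among several ``well-known facts'' assumed without proof (the surrounding paragraph points to Conrad's notes \cite{Conrad} for background on quadratic rings). Your route via the monogenic presentation $\mathcal O_{\mathbb{Q}(\sqrt{d})}\cong\mathbb{Z}[x]/(f(x))$ and Dedekind's factorization criterion is the standard one, and all the case checks you carry out for $p=2$ are accurate (in particular, the observation that every $d\in K\setminus\{-1,-2,-7\}$ satisfies $d\equiv 5\imod 8$ is correct and cleanly handles the inert cases). So your write-up supplies strictly more than the paper does here; the only point you flag as delicate---matching the factorization type of $\overline f$ to the trichotomy of Fact~\ref{Fact1.1}---is indeed the substantive step, and it is exactly what Dedekind's theorem provides in a monogenic ring of integers.
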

\begin{fact} \label{Fact1.5}
Let $\mathcal O_{\mathbb{Q}(\sqrt{d})}^*$ be the set of units in the ring $\mathcal O_{\mathbb{Q}(\sqrt{d})}$. Then $\mathcal O_{\mathbb{Q}(\sqrt{-1})}^*=\{\pm 1,\pm i\}$, $\displaystyle{\mathcal O_{\mathbb{Q}(\sqrt{-3})}^*=\left\{\pm 1,\pm \frac{1+\sqrt{-3}}{2},\pm \frac{1-\sqrt{-3}}{2}\right\}}$, and $\mathcal O_{\mathbb{Q}(\sqrt{d})}^*=\{\pm 1\}$ \\ 
whenever $d\in K\backslash\{-1,-3\}$. 
\end{fact}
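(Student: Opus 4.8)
The plan is to reduce the problem to counting elements of norm $1$ and then to solve the resulting Diophantine equation in each of the two shapes that $\mathcal O_{\mathbb{Q}(\sqrt{d})}$ can take. First I would observe that an element $z\in\mathcal O_{\mathbb{Q}(\sqrt{d})}$ is a unit if and only if $N(z)=1$. Indeed, if $zw=1$, then multiplicativity of the norm gives $N(z)N(w)=1$; because $d<0$, every nonzero element satisfies $N(z)=\vert z\vert^2\in\mathbb{N}$, so $N(z)=1$. Conversely, if $N(z)=z\overline{z}=1$, then $\overline{z}\in\mathcal O_{\mathbb{Q}(\sqrt{d})}$ is a multiplicative inverse of $z$, so $z$ is a unit. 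Hence determining $\mathcal O_{\mathbb{Q}(\sqrt{d})}^*$ amounts to finding all solutions of $N(z)=1$.

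Next I would split according to the congruence class of $d$ modulo $4$. When $d\equiv 2,3\imod{4}$, we have $\mathcal O_{\mathbb{Q}(\sqrt{d})}=\mathbb{Z}[\sqrt{d}]$, so $z=a+b\sqrt{d}$ with $a,b\in\mathbb{Z}$ and $N(z)=a^2+\vert d\vert b^2$. Solving $a^2+\vert d\vert b^2=1$ over $\mathbb{Z}$ is immediate: if $\vert d\vert\geq 2$, the term $\vert d\vert b^2$ forces $b=0$ and hence $a=\pm 1$, giving exactly the units $\pm 1$; this covers $d=-2$. The sole exceptional value in this branch is $d=-1$, where $a^2+b^2=1$ has the four solutions $(\pm 1,0)$ and $(0,\pm 1)$, yielding $\{\pm 1,\pm i\}$.

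When $d\equiv 1\imod{4}$, we have $\mathcal O_{\mathbb{Q}(\sqrt{d})}=\mathbb{Z}[\frac{1+\sqrt{d}}{2}]$, whose elements can be written as $z=\frac{a+b\sqrt{d}}{2}$ with $a,b\in\mathbb{Z}$ of the same parity, so that $N(z)=\frac{a^2+\vert d\vert b^2}{4}$. The equation $N(z)=1$ becomes $a^2+\vert d\vert b^2=4$. For $d=-3$ this is $a^2+3b^2=4$, whose integer solutions respecting the parity constraint are $(\pm 2,0)$ and $(\pm 1,\pm 1)$, giving the six units $\pm 1$ and $\pm\frac{1\pm\sqrt{-3}}{2}$. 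For every other admissible $d\equiv 1\imod{4}$ in $K$, namely $d\in\{-7,-11,-19,-43,-67,-163\}$, we have $\vert d\vert\geq 7>4$, so $b=0$ is forced, whence $a=\pm 2$ and $z=\pm 1$.

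The only genuinely delicate point, and the step I would treat most carefully, is the parity bookkeeping in the $d\equiv 1\imod{4}$ case: one must verify that each claimed solution $(a,b)$ of $a^2+\vert d\vert b^2=4$ actually satisfies $a\equiv b\imod{2}$, so that $z$ lies in $\mathcal O_{\mathbb{Q}(\sqrt{d})}$, and conversely that no valid $z$ is overlooked. Everything else reduces to the elementary observation that a perfect square plus a positive multiple of $b^2$ can equal a small constant only for very small $b$, after which the finitely many cases are checked directly.
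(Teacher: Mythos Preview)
Your argument is correct and is the standard elementary proof of this classical result. Note, however, that the paper does not supply its own proof of Fact~\ref{Fact1.5}: it is listed among several ``well-known facts'' about imaginary quadratic rings, with the reader referred to Conrad's notes~\cite{Conrad} for details. So there is no paper proof to compare against; you have simply filled in a proof the paper chose to omit, and you did so cleanly.
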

\par 
For a nonzero complex number $z$, let $\arg (z)$ denote the argument, or angle, of $z$. We convene to write $\arg (z)\in [0,2\pi)$ for all $z\in\mathbb{C}$. For each $d\in K$, we define the set $A(d)$ by 
\[A(d)=\begin{cases} \{z\in\mathcal O_{\mathbb{Q}(\sqrt{d})} \backslash\{0\}: 0\leq \arg (z)<\frac{\pi}
{2}\}, & \mbox{if } d=-1; \\ \{z\in\mathcal O_{\mathbb{Q}(\sqrt{d})} \backslash\{0\}: 0\leq \arg (z)<\frac{\pi}
{3}\}, & \mbox{if } d=-3; \\ \{z\in\mathcal O_{\mathbb{Q}(\sqrt{d})} \backslash\{0\}: 0\leq \arg (z)<\pi\}, & \mbox{otherwise}. \end{cases}\] 
Thus, every nonzero element of $\mathcal O_{\mathbb{Q}(\sqrt{d})}$ can be written uniquely as a unit times a product of primes in $A(d)$. Also, every $z\in\mathcal O_{\mathbb{Q}(\sqrt{d})}\backslash\{0\}$ is associated to a unique element of $A(d)$. The author has defined analogues of the arithmetic functions $\sigma_k$ in quadratic rings $\mathcal O_{\mathbb{Q}(\sqrt{d})}$ with $d\in K$ \cite{Defant14A}, and we will state the important definitions and properties for the sake of completeness.   
\begin{definition} \label{Def1.1}
Let $d\in K$, and let $n\in \mathbb{Z}$. 
Define the function 
\newline $\delta_n\colon\mathcal O_{\mathbb{Q}(\sqrt{d})}\backslash\{0\}\rightarrow [1,\infty)$ by 
\[\delta_n (z)=\sum_{\substack{x\vert z\\x\in A(d)}}\vert x \vert^n.\]
\end{definition}
\begin{remark} \label{Rem1.1}
We note that, for each $x$ in the summation in the above definition, we may cavalierly replace $x$ with one of its associates. This is because associated numbers have the same absolute value. In other words, the only reason for the criterion $x\!\in\! A (d)$ in the summation that appears in Definition \ref{Def1.1} is to forbid us from counting associated divisors as distinct terms in the summation, but we may choose to use any of the associated divisors as long as we only choose one. This should not be confused with how we count conjugate divisors (we treat $2+i$ and $2-i$ as distinct divisors of $5$ in $\mathbb{Z}[i]$ because $2+i\not\sim 2-i$).  
\end{remark}

\begin{remark} \label{Rem1.2}
We mention that the function $\delta_n$ is different in each ring $\mathcal O_{\mathbb{Q}(\sqrt{d})}$. Perhaps it would be more precise to write $\delta_n(z,d)$, but we will omit the latter component for convenience. We note that we will also use this convention with functions such as $I_n$ (which we will define soon). 
\end{remark}
\par 
We will say that a function $f\colon\mathcal O_{\mathbb{Q}(\sqrt{d})}\backslash\{0\}\!\rightarrow\!\mathbb{R}$ is multiplicative if $f(xy)=f(x)f(y)$ whenever $x$ and $y$ are relatively prime (have no nonunit common divisors). The author has shown that,
for any integer $n$, $\delta_n$ is multiplicative \cite{Defant14A}.
\begin{definition} \label{Def1.2}
For each positive integer $n$, define the function \\ 
$I_n\colon\mathcal O_{\mathbb{Q}(\sqrt{d})}\backslash\{0\}\rightarrow[1,\infty)$ by $\displaystyle{I_n(z)=\frac{\delta_n(z)}{\vert z\vert ^n}}$. 
For a positive integer $t\geq 2$, we say that a number $z\!\in\!\mathcal O_{\mathbb{Q}(\sqrt{d})}\backslash\{0\}$ is \textit{$n$-powerfully $t$-perfect in $\mathcal O_{\mathbb{Q}(\sqrt{d})}$} if $I_n(z)=t$, and, if $t=2$, we simply say that $z$ is \textit{$n$-powerfully perfect in $\mathcal O_{\mathbb{Q}(\sqrt{d})}$}. 
\end{definition} 
As an example, we will let $d=-1$ so that $\mathcal O_{\mathbb{Q}(\sqrt{d})}=\mathbb{Z}[i]$. Let us compute $I_2(9+3i)$. We have $9+3i=3(1+i)(2-i)$, so $\delta_2(9+3i)=N(1)+N(3)+N(1+i)+N(2-i)+N(3(1+i))+N(3(2-i))+N((1+i)(2-i))+N(3(1+i)(2-i))=1+9+2+5+18+45+10+90=180$. Then $\displaystyle{I_2(9+3i)=\frac{180}{N(3(1+i)(2-i))}=2}$, so $9+3i$ is $2$-powerfully perfect in $\mathcal O_{\mathbb{Q}(\sqrt{-1})}$. 
\begin{theorem} \label{Thm1.1}
Let $n\!\in\!\mathbb{N}$, $d\!\in\! K$, and $z_1, z_2, \pi\in\mathcal O_{\mathbb{Q}(\sqrt{d})}\backslash\{0\}$ with $\pi$ a prime. Then, if we are working in the ring $\mathcal O_{\mathbb{Q}(\sqrt{d})}$, the following statements are true. 
\begin{enumerate}[(a)] % (a), (b), (c), (d)
\item The range of $I_n$ is a subset of the interval $[1,\infty)$, and $I_n(z_1)=1$ if and only if $z_1$ is a unit in $\mathcal O_{\mathbb{Q}(\sqrt{d})}$. If $n$ is even, then $I_n(z_1)\in\mathbb{Q}$. 
\item $I_n$ is multiplicative.  
\item $I_n(z_1)=\delta_{-n}(z_1)$. 
\item If $z_1\vert z_2$, then $I_n(z_1)\leq I_n(z_2)$, with equality if and only if $z_1\sim z_2$. 
\end{enumerate}  	
\end{theorem}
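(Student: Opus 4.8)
The plan is to establish the four parts in the order (a), (b), (c), (d), because part (c) provides the alternative formula $I_n(z_1)=\delta_{-n}(z_1)=\sum_{x\mid z_1,\,x\in A(d)}\vert x\vert^{-n}$ that makes the divisibility comparison in (d) almost immediate. So the real work is concentrated in (c), and I expect that to be the main obstacle.

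For (a), I would first note that among the $A(d)$-divisors of any $z_1$ counted by $\delta_n(z_1)$ are both a unit (the representative $1\in A(d)$) and the unique associate of $z_1$ lying in $A(d)$, the latter contributing $\vert z_1\vert^n$ since associates share the same absolute value. Hence $\delta_n(z_1)\geq\vert z_1\vert^n$, giving $I_n(z_1)\geq 1$. If $z_1$ is a unit, all its divisors are units associated to $1$, so $\delta_n(z_1)=1=\vert z_1\vert^n$ and $I_n(z_1)=1$; if $z_1$ is a nonunit, the $A(d)$-representatives of $1$ and of $z_1$ are distinct divisors, so $\delta_n(z_1)\geq 1+\vert z_1\vert^n>\vert z_1\vert^n$ and $I_n(z_1)>1$. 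For the rationality claim I would write $\vert x\vert^n=N(x)^{n/2}$; when $n$ is even, $n/2\in\mathbb{N}$ and $N(x)\in\mathbb{Z}$, so $\delta_n(z_1)$ and $\vert z_1\vert^n$ are integers and their quotient is rational. Part (b) is then immediate from the established multiplicativity of $\delta_n$ and the multiplicativity of the norm: for relatively prime $z_1,z_2$ one has $I_n(z_1z_2)=\delta_n(z_1)\delta_n(z_2)/(\vert z_1\vert^n\vert z_2\vert^n)=I_n(z_1)I_n(z_2)$.

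The heart of the argument is (c), which I would prove by a complementation bijection on the divisor set. Let $D$ be the set of $A(d)$-divisors of $z_1$. For each $x\in D$, the quotient $z_1/x$ is again a divisor of $z_1$ and has a unique associate $c(x)\in A(d)$; the resulting map $c\colon D\to D$ is an involution, hence a bijection, and satisfies $\vert c(x)\vert=\vert z_1\vert/\vert x\vert$ because the norm is multiplicative. Reindexing the defining sum through $c$ gives $\delta_n(z_1)=\sum_{x\in D}\vert c(x)\vert^n=\vert z_1\vert^n\sum_{x\in D}\vert x\vert^{-n}=\vert z_1\vert^n\,\delta_{-n}(z_1)$, and dividing by $\vert z_1\vert^n$ yields $I_n(z_1)=\delta_{-n}(z_1)$. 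The delicate step, and the one I expect to require the most care, is verifying that $c$ is well-defined and bijective as a map on $A(d)$-representatives: one must confirm that distinct elements of $D$ have distinct complementary representatives and that every element of $D$ is realized as some $c(x)$. This is precisely the bookkeeping sanctioned by Remark \ref{Rem1.1}, since it is exactly the convention of choosing one associate per divisor class that makes the complementation well-behaved.

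Finally, (d) follows cleanly from (c). If $z_1\mid z_2$, then every divisor of $z_1$ divides $z_2$, so the $A(d)$-divisor set of $z_1$ is contained in that of $z_2$; since each term $\vert x\vert^{-n}$ is positive, $I_n(z_1)=\delta_{-n}(z_1)\leq\delta_{-n}(z_2)=I_n(z_2)$. Equality forces the two divisor sets to coincide. But if $z_1\not\sim z_2$, then the $A(d)$-representative of $z_2$ is a divisor of $z_2$ that does not divide $z_1$ (otherwise $z_2\mid z_1$, which together with $z_1\mid z_2$ would give $z_1\sim z_2$), contributing a strictly positive extra term. Hence equality holds if and only if $z_1\sim z_2$, completing the proof.
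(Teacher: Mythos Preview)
Your argument is correct in all four parts. The complementation bijection $x\mapsto c(x)$ on the set of $A(d)$-divisors is well-defined and is an involution for exactly the reason you indicate (since $z_1/c(x)$ is associated to $x$ and the $A(d)$-representative is unique), and the remaining parts follow from it as you describe.

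Regarding the comparison: the paper does not actually prove Theorem~\ref{Thm1.1} here. Immediately after stating it, the author writes ``We refer the reader to \cite{Defant14A} for a proof of Theorem~\ref{Thm1.1}.'' So there is no in-paper proof to compare against; your self-contained argument supplies what the paper outsources to the companion reference. The approach you take---complementing divisors to obtain $I_n=\delta_{-n}$ and then reading off monotonicity from the positivity of the summands---is the standard one and is almost certainly what appears in the cited paper, since it directly mirrors the classical proof that $\sigma_1(n)/n=\sigma_{-1}(n)$ over~$\mathbb{Z}$.
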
 
We refer the reader to \cite{Defant14A} for a proof of Theorem \ref{Thm1.1}. The author has already investigated $1$-powerfully $t$-perfect numbers in imaginary quadratic rings with unique factorization, and he has shown that, for any integers $n\geq 3$ and $t\geq 2$, no $n$-powerfully $t$-perfect numbers exist in these rings \cite{Defant14B}. Hence, the remainder of this paper will focus on the interesting topic of $2$-powerfully $t$-perfect numbers. 

\section{Investigating $2$-powerfully $t$-perfect Numbers}
Trying to find $2$-powerfully $t$-perfect numbers is quite a pleasant activity. One reason for this is that $2$ is the only positive integer $n$ for which there exist $n$-powerfully $t$-perfect numbers that are not associated to integers \cite{Defant14B}. For example, in $\mathcal O_{\mathbb{Q}(\sqrt{-1})}$, $3+9i$ is $2$-powerfully perfect, and $30+30i$ is $2$-powerfully $3$-perfect. We will also utilize the helpful that, for any $d\in K$ and $z\in\mathcal O_{\mathbb{Q}(\sqrt{d})}\backslash\{0\}$, we have $N(z)$, $\delta_2(z)\in\mathbb{N}$. In this section, we will focus on the rings $\mathcal O_{\mathbb{Q}(\sqrt{-1})}$, $\mathcal O_{\mathbb{Q}(\sqrt{-2})}$, and $\mathcal O_{\mathbb{Q}(\sqrt{-7})}$, which are the only rings $\mathcal O_{\mathbb{Q}(\sqrt{d})}$ with $d\in K$ in which $2$ is not inert. 
\begin{theorem} \label{Thm2.1} 
Let us work in a ring $\mathcal O_{\mathbb{Q}(\sqrt{d})}$ with $d\in\{-1,-2\}$. Then $2$ ramifies in $\mathcal O_{\mathbb{Q}(\sqrt{d})}$, so we may write $2\sim\xi^2$ for some prime $\xi$ satisfying $\xi\sim\overline{\xi}$ and $N(\xi)=2$. Suppose $z$ is $2$-powerfully perfect in $\mathcal O_{\mathbb{Q}(\sqrt{d})}$ and $\xi\vert z$. Then we may write $z=\xi^\gamma x$, where $\gamma\in\mathbb{N}$, $x\in\mathcal O_{\mathbb{Q}(\sqrt{d})}$, $\xi\nmid x$, and $2^{\gamma+1}-1$ is a Mersenne prime that is inert in $\mathcal O_{\mathbb{Q}(\sqrt{d})}$. Furthermore, there exists an odd positive integer $m$ such that $\delta_2(x)=2^{\gamma+1}m$ and $N(x)=(2^{\gamma+1}-1)m$. 
\end{theorem}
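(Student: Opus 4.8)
The plan is to use the multiplicativity of $I_2$ (Theorem~\ref{Thm1.1}(b)) to peel off the ramified factor $\xi^\gamma$. Since $\xi$ is prime with $\xi\sim\overline\xi$, the only divisors of $\xi^\gamma$ up to associates are $1,\xi,\dots,\xi^\gamma$, so $\delta_2(\xi^\gamma)=\sum_{k=0}^{\gamma}N(\xi)^k=2^{\gamma+1}-1$ and $I_2(\xi^\gamma)=(2^{\gamma+1}-1)/2^{\gamma}$. As $\xi\nmid x$, the numbers $\xi^\gamma$ and $x$ are relatively prime, so $2=I_2(z)=I_2(\xi^\gamma)I_2(x)$ gives
\[I_2(x)=\frac{2^{\gamma+1}}{2^{\gamma+1}-1}=\frac{M+1}{M},\qquad M:=2^{\gamma+1}-1.\]

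Next I would read off the two norm identities. Writing $I_2(x)=\delta_2(x)/N(x)$ and clearing denominators gives $M\,\delta_2(x)=(M+1)N(x)$; since $N(x),\delta_2(x)\in\mathbb N$ and $\gcd(M,M+1)=1$, it follows that $M\mid N(x)$. Putting $m:=N(x)/M$ then yields $N(x)=(2^{\gamma+1}-1)m$ and $\delta_2(x)=(M+1)m=2^{\gamma+1}m$ simultaneously. To obtain that $m$ is odd I would show $N(x)$ is odd: if $2\mid N(x)$ in $\mathbb Z$ then $\xi^2\sim2\mid x\overline x$, and since $\xi$ is prime with $\xi\sim\overline\xi$ this forces $\xi\mid x$, contradicting $\xi\nmid x$. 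As $M$ is odd, $m=N(x)/M$ is odd too.

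The real content is that $M$ is a prime that is inert. For primality I would run a Euclid--Euler-style argument on the divisor sum. Rewriting $M\,\delta_2(x)=(M+1)N(x)$ as $M\big(\delta_2(x)-N(x)\big)=N(x)$ shows that the sum of the norms of the proper divisors of $x$ equals $m=N(x)/M$. Applying Theorem~\ref{Thm1.1}(d) to each prime $\pi\mid x$ gives $I_2(\pi)=(N(\pi)+1)/N(\pi)\le I_2(x)=(M+1)/M$, so every prime dividing $x$ has norm at least $M$, with equality forcing $x\sim\pi$. Combining this lower bound on prime norms with the divisor-sum identity is meant to force $M$ to be an integer prime; since $M+1=2^{\gamma+1}$, this $M$ is then a Mersenne prime. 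Its splitting behaviour is finally read off from Fact~\ref{Fact1.4}: because $\gamma\ge1$ we have $M=2^{\gamma+1}-1\equiv3\imod{4}$, which makes $M$ inert at once when $d=-1$.

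The hard part will be the inertness claim together with the collapse step in the divisor-sum argument. For $d=-1$ the congruence $M\equiv3\imod{4}$ decides inertness immediately, but for $d=-2$ inertness of the prime $M$ is equivalent, via Fact~\ref{Fact1.4}, to $-2$ being a quadratic nonresidue modulo $M$, i.e.\ to a condition on $M\bmod 8$; one has $2^{\gamma+1}-1\equiv7\imod{8}$ exactly when $\gamma\ge2$, so this residue computation --- and in particular the boundary case $\gamma=1$, $M=3$ --- is where the argument must be scrutinised most carefully. I also expect the collapse step to demand the most care: unlike in $\mathbb Z$, a factorization $M\mid N(x)$ need not come from an element of $\mathcal O_{\mathbb{Q}(\sqrt{d})}$ of norm $M$ dividing $x$, so exhibiting the divisor that drives the proper-divisor sum down far enough to pin $M$ as prime requires genuine bookkeeping with Facts~\ref{Fact1.1} and \ref{Fact1.2}.
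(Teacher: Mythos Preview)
Your derivation of the identities $\delta_2(x)=2^{\gamma+1}m$ and $N(x)=(2^{\gamma+1}-1)m$ with $m$ odd matches the paper's argument essentially line for line. The point of divergence is the primality/inertness step. The paper does not first prove that $M=2^{\gamma+1}-1$ is an integer prime and then check inertness via Fact~\ref{Fact1.4}; instead it argues directly that $M$ is a prime \emph{in the ring}: assuming $M=y_1y_2$ with $1<N(y_1)\le N(y_2)$, it produces a prime $\pi$ of norm at most $M$ dividing $x$ and counts the three divisors $1$, $x/\pi$, $x$ to force $\delta_2(x)\ge 1+N(x)/N(\pi)+N(x)\ge 1+2^{\gamma+1}m$, a contradiction. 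Since an integer that is prime in $\mathcal O_{\mathbb{Q}(\sqrt{d})}$ is automatically inert, no residue computation is needed, and this is meant to sidestep your worry about evaluating $\left(\frac{-2}{M}\right)$.

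However, your instinct that the boundary case $d=-2$, $\gamma=1$, $M=3$ needs scrutiny is exactly right, and in fact it breaks \emph{both} arguments. Take $z=\sqrt{-2}\,(1+\sqrt{-2})=-2+\sqrt{-2}$ in $\mathcal O_{\mathbb{Q}(\sqrt{-2})}$: then $N(z)=6$ and $\delta_2(z)=1+2+3+6=12$, so $I_2(z)=2$; here $\gamma=1$, $x=1+\sqrt{-2}$, $m=1$, and $M=3$ \emph{splits} as $3=(1+\sqrt{-2})(1-\sqrt{-2})$, contradicting the inertness claim in the theorem. The paper's proof fails precisely at the assertion that ``$x/\pi$ is a nonunit proper divisor of $x$'': when $m=1$ one has $N(x)=M$, so $x\sim\pi$ and $x/\pi$ is a unit, collapsing the three-divisor lower bound. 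Your ``collapse step'' cannot be completed either, for the same reason: the inequality $I_2(\pi)\le I_2(x)$ becomes an equality here and forces nothing beyond $x\sim\pi$. So the difficulty you flagged is not a boundary nuisance to be patched but a genuine counterexample to the statement as written for $d=-2$.
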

\begin{proof} 
We know the first part of the theorem, which is stated simply to introduce notation. All that we need to prove is the final sentence of the theorem, as well as the fact that $2^{\gamma+1}-1$ is 
a Mersenne prime that is inert in $\mathcal O_{\mathbb{Q}(\sqrt{d})}$. As $z$ is $2$-powerfully perfect in $\mathcal O_{\mathbb{Q}(\sqrt{d})}$, we have 
\[\delta_2(z)=2N(z)=2N(\xi^\gamma)N(x)=2^{\gamma+1}N(x).\]
However, we 
also have 
\[\delta_2(z)=\delta_2(\xi^\gamma)\delta_2(x)=\left(\sum_{j=0}^\gamma N(\xi^j)\right)\delta_2(x)\]
\[=\left(\sum_{j=0}^\gamma 2^j\right)\delta_2(x)=(2^{\gamma+1}-1)\delta_2(x).\] 
Therefore, $2^{\gamma+1}N(x)=(2^{\gamma+1}-1)\delta_2(x)$. As $2^{\gamma+1}-1$ is odd, we find that $2^{\gamma+1}\vert\delta_2(x)$ in $\mathbb{Z}$. We may then write $\delta_2(x)=2^{\gamma+1}m$ for some positive integer $m$. Substituting this new expression for $\delta_2(x)$ into the equation $2^{\gamma+1}N(x)=(2^{\gamma+1}-1)\delta_2(x)$, we find $N(x)=(2^{\gamma+1}-1)m$. This tells us that $m$ is odd because $\xi\nmid x$ (implying that $2\nmid N(x)$ in $\mathbb{Z})$. Suppose that $2^{\gamma+1}-1$ is not a prime in $\mathcal O_{\mathbb{Q}(\sqrt{d})}$ so that we may write $2^{\gamma+1}-1=y_1y_2$, where $y_1, y_2\in\mathcal O_{\mathbb{Q}(\sqrt{d})}$ satisfy $1<N(y_1)\leq N(y_2)<N(2^{\gamma+1}-1)=(2^{\gamma+1}-1)^2$. Then, because $N(y_1)N(y_2)=N(2^{\gamma+1}-1)=(2^{\gamma+1}-1)^2$, we see that $N(y_1)\leq 2^{\gamma+1}-1$. Now, let $\pi_0$ be a prime that divides $y_1$. Then $\pi_0\vert N(x)$, which implies that either $\pi_0\vert x$ or $\overline{\pi_0}\vert x$. If $\pi_0\vert x$, write $\pi=\pi_0$. Otherwise, write $\pi=\overline{\pi_0}$.   
Then $N(\pi)\leq N(y_1)\leq 2^{\gamma+1}-1$, and $\displaystyle{\frac{x}{\pi}}$ is a nonunit proper divisor of $x$. This implies that 
\[\delta_2(x)\geq 1+N\left(\frac{x}{\pi}\right)+N(x)=1+\frac{N(x)}{N(\pi)}+N(x)\]
\[=1+\frac{(2^{\gamma+1}-1)m}{N(\pi)}+(2^{\gamma+1}-1)m\geq 1+\frac{(2^{\gamma+1}-1)m}{2^{\gamma+1}-1}+(2^{\gamma+1}-1)m\]
\[=1+2^{\gamma+1}m.\] 
However, this contradicts the fact that $\delta_2(x)=2^{\gamma+1}m$, so we conclude that $2^{\gamma+1}-1$ is a prime in $\mathcal O_{\mathbb{Q}(\sqrt{d})}$. Furthermore, because $2^{\gamma+1}-1$ is an integer, we conclude that $2^{\gamma+1}-1$ is an inert integer prime that is also a Mersenne prime. 
\end{proof}
\begin{theorem} \label{Thm2.2}
Let $z$, $m$, $\gamma$, and $x$ be as in Theorem \ref{Thm2.1}. Write $q=2^{\gamma+1}-1$ and $m=q^kv$, where $k\in\mathbb{N}_0$, $v\in\mathbb{N}$, and $q\nmid v$ in $\mathbb{Z}$. Then $k$ is odd, $v\geq q+2$, and
\[m\geq q^{k+1}+(q+3)\sum_{j=0}^{\frac{k-1}{2}}q^{2j}\geq q^2+q+3.\]
\end{theorem}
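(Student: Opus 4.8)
\emph{Plan.} The strategy is to distill Theorem \ref{Thm2.1} into a single Diophantine relation $(q+1)v = St$ and then exploit the fact, crucial but easy to overlook, that $q+1 = 2^{\gamma+1}$ is a pure power of $2$. I will not need the inequality $\delta_2(w)\geq 1+N(w)$ at all; the bounds come purely from integrality together with the $2$-adic structure of $q+1$.

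First I would dispatch the parity claim. Writing $q=2^{\gamma+1}-1$, Theorem \ref{Thm2.1} tells us $q$ is inert, and $N(x)=qm=q^{k+1}v$ with $q\nmid v$ in $\mathbb{Z}$, so $\upsilon_q(N(x))=k+1$. By Fact \ref{Fact1.3} this is even, so $k$ is odd and $\rho_q(x)=\tfrac{k+1}{2}=:\ell$. Factoring $x=q^\ell w$ with $q\nmid w$ and using multiplicativity, $\delta_2(x)=\delta_2(q^\ell)\delta_2(w)$ where $\delta_2(q^\ell)=\sum_{j=0}^\ell q^{2j}=:S$ (here $N(q)=q^2$ by Fact \ref{Fact1.2}) and $N(w)=v$. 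Since $z$ is $2$-powerfully perfect, $\delta_2(x)=2^{\gamma+1}q^k v$; as $S\equiv 1\imod{q}$ we have $q^k\mid\delta_2(w)$, so writing $\delta_2(w)=q^k t$ with $t\in\mathbb{N}$ gives the central relation $(q+1)v=St$, where $S=\sum_{j=0}^{(k+1)/2}q^{2j}=\frac{q^{k+3}-1}{q^2-1}$. Set also $\Sigma=\sum_{j=0}^{(k-1)/2}q^{2j}$, so that $S=\Sigma+q^{k+1}$.

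Both remaining claims follow by estimating $v=\tfrac{St}{q+1}\geq\tfrac{S}{q+1}$ and splitting on $k$. For $k\geq 3$ the sum already satisfies $S\geq 1+q^2+q^4>(q+1)^2$, so $v>q+1$, hence $v\geq q+2$; and the main inequality, after clearing the denominator $q+1$ and using $S=\Sigma+q^{k+1}$, reduces to $q^{k+1}(q^k-q-1)\geq(q^2+4q+3-q^k)\Sigma$, which holds trivially since the left side is positive while the right side is negative for $k\geq 3$ and $q\geq 3$. The delicate case is $k=1$, where $S=q^2+1$ and the bound $v\geq S/(q+1)$ is genuinely too weak: for $q\geq 7$ it falls just short of the target. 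Here I would invoke integrality and the power-of-two structure. Since $\upsilon_2(q^2+1)=1$ while $\upsilon_2(q+1)=\gamma+1\geq 2$, the requirement that $v=\frac{(q^2+1)t}{q+1}$ be an integer forces $2^\gamma\mid t$, so $t\geq 2^\gamma$ and $v\geq\frac{q^2+1}{2}\geq q+2$ (the last step is $(q-3)(q+1)\geq 0$). This same bound gives $m=qv\geq\frac{q^3+q}{2}$, and the identity $\frac{q^3+q}{2}-(q^2+q+3)=\tfrac12(q-3)(q^2+q+2)\geq 0$ yields $m\geq q^2+q+3$, with equality exactly when $q=3$. Since $q^{k+1}+(q+3)\Sigma$ is increasing in the odd parameter $k$ and equals $q^2+q+3$ at $k=1$, the full chain $m\geq q^{k+1}+(q+3)\Sigma\geq q^2+q+3$ follows.

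The main obstacle is precisely this $k=1$ case: continuous estimates are insufficient, and everything hinges on the arithmetic observation that $q+1$ is a power of $2$ with $\upsilon_2(q^2+1)=1$. This is what pins $t$, and hence $v$, down to the sharp value $q+2$ that is in fact realized (e.g.\ by $3+9i$ in $\mathbb{Z}[i]$, where $q=3$ and $v=5$). The remaining steps for $k\geq 3$ are routine because $S$ grows fast enough to swamp the target.
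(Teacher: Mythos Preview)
Your proof is correct, and it takes a route genuinely different from the paper's. The paper never extracts the equation $(q+1)v=St$; instead it uses the trivial lower bound $\delta_2(y)\geq 1+N(y)$ (precisely what you announce you will avoid) to get
\[
(q+1)m=\delta_2(x)=\delta_2(q^{(k+1)/2})\,\delta_2(y)\geq (v+1)S,
\]
which after the same splitting $S=q^{k+1}+\Sigma$ rearranges to $m\geq q^{k+1}+(v+1)\Sigma$. Dividing by $q^k$ gives $v>q$, and since both $v$ and $q$ are odd this forces $v\geq q+2$; substituting back yields the stated bound on $m$ uniformly in $k$.

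So the two arguments diverge at the second step. The paper's approach is more uniform: one inequality handles all odd $k$ simultaneously, with no case split and no appeal to the $2$-adic structure of $q+1$. Your approach trades that inequality for an exact Diophantine relation, which is pleasant, but then the $k=1$ case genuinely requires the extra observation that $q+1=2^{\gamma+1}$ and $\upsilon_2(q^2+1)=1$; without it the naive bound $t\geq 1$ gives only $v\geq\frac{q^2+1}{q+1}<q+1$, which is too weak. For $k\geq 3$ your size estimate $S>(q+1)^2$ is more than enough, as you note. Each method has its appeal: the paper's is shorter and case-free; yours isolates exactly where the Mersenne structure of $q$ is doing work and shows that in the extremal case $q=3$ the bound is tight for arithmetic rather than analytic reasons.
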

\begin{proof}
First, note that $q$ is inert and $\upsilon_q(N(x))=k+1$. Therefore, Fact \ref{Fact1.3} implies that $k$ is odd and $\displaystyle{\rho_q(x)=\frac{k+1}{2}}$. Next, assume that $v=1$. Then $m=q^k$, so $x\sim q^{\frac{k+1}{2}}$. This implies that $\displaystyle{\delta_2(x)=\sum_{j=0}^{\frac{k+1}{2}}q^{2j}\equiv 1\imod{q}}$. However, this contradicts Theorem \ref{Thm2.1}, which tells us, under the assumption $m=q^k$, that $\delta_2(x)=2^{\gamma+1}m=(q+1)m=(q+1)q^k\equiv 0\imod{q}$. Therefore, $v>1$. Now, write $\displaystyle{y=\frac{x}{q^{(k+1)/2}}}$. Then, using Theorem \ref{Thm2.1}, 
\[N(y)=\frac{N(x)}{N(q^{\frac{k+1}{2}})}=\frac{qm}{q^{k+1}}=\frac{q^{k+1}v}{q^{k+1}}=v.\]
Because $\displaystyle{\rho_q(x)=\frac{k+1}{2}}$, we see that $y$ and $q^{k+1}$ are relatively prime. Therefore, 
\[\delta_2(x)=\delta_2(y)\delta_2(q^{\frac{k+1}{2}})=\delta_2(y)\sum_{j=0}^{\frac{k+1}{2}}q^{2j}\geq (v+1)\sum_{j=0}^{\frac{k+1}{2}}q^{2j}.\] 
Theorem \ref{Thm2.1} states that $\delta_2(x)=2^{\gamma+1}m=(q+1)m$, so we have 
\[(q+1)m\geq (v+1)\sum_{j=0}^{\frac{k+1}{2}}q^{2j}=q^{k+1}v+q^{k+1}+(v+1)\sum_{j=0}^{\frac{k-1}{2}}q^{2j}\]
\[=qm+q^{k+1}+(v+1)\sum_{j=0}^{\frac{k-1}{2}}q^{2j}.\] 
We can simplify this last inequality to get 
\begin{equation} \label{Eq2.1}
m\geq q^{k+1}+(v+1)\sum_{j=0}^{\frac{k-1}{2}}q^{2j}.
\end{equation} 
Therefore, $\displaystyle{v=\frac{m}{q^k}\geq q+(v+1)\sum_{j=0}^{\frac{k-1}{2}}q^{2j-k}>q}$. As $v$ and $q$ are both odd and $v>q$, we conclude that $v\geq q+2$. Substituting this into \eqref{Eq2.1}, we have 
\[m\geq q^{k+1}+(q+3)\sum_{j=0}^{\frac{k-1}{2}}q^{2j}\geq q^2+q+3,\]
which completes the proof. 
\end{proof}
It is interesting to note that, in the case $z=3+9i$ in $\mathcal O_{\mathbb{Q}(\sqrt{-1})}$, the inequalities in Theorem \ref{Thm2.2} are, in fact, equalities. That is, $q=3$, $v=q+2=5$, and $m=q^2+q+3=15$.  It seems likely, in light of the inequalities in Theorem \ref{Thm2.2}, that the value of $k$ in Theorem \ref{Thm2.2} should have to be $1$.
\par 
We now prove results similar to Theorems \ref{Thm2.1} and \ref{Thm2.2} in the ring $\mathcal O_{\mathbb{Q}(\sqrt{-7})}$. 
\begin{theorem} \label{Thm2.3} 
Let us work in the ring $\mathcal O_{\mathbb{Q}(\sqrt{-7})}$ so that $2$ splits as $2=\varepsilon\overline{\varepsilon}$, where $\varepsilon=\frac{1+\sqrt{-7}}{2}$. Suppose $z$ is $2$-powerfully perfect in $\mathcal O_{\mathbb{Q}(\sqrt{-7})}$ and $2\vert N(z)$ in $\mathbb{Z}$. Then either $z=\varepsilon^\gamma x$ or $z=\overline{\varepsilon}^\gamma x$, where $\gamma\in\mathbb{N}$, $x\in\mathcal O_{\mathbb{Q}(\sqrt{-7})}$, $2\nmid N(x)$ in $\mathbb{Z}$, and $2^{\gamma+1}-1$ is a Mersenne prime that is inert in $\mathcal O_{\mathbb{Q}(\sqrt{-7})}$. Furthermore, there exists an odd positive integer $m$ such that $\delta_2(x)=2^{\gamma+1}m$ and $N(x)=(2^{\gamma+1}-1)m$. 
\end{theorem}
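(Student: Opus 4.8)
The plan is to mirror the proof of Theorem \ref{Thm2.1} as closely as possible, inserting one new argument at the front to handle the only genuinely new feature of the split case: there are now two distinct primes, $\varepsilon$ and $\overline{\varepsilon}$, lying above $2$, and before anything else I must rule out the possibility that both of them divide $z$. Once that is done, the element $z$ will have exactly the single-prime shape treated in Theorem \ref{Thm2.1}, and the rest of the proof transcribes directly.

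First I would write $z=\varepsilon^a\overline{\varepsilon}^b x$ with $a,b\in\mathbb{N}_0$, $\varepsilon\nmid x$, and $\overline{\varepsilon}\nmid x$; since $2\vert N(z)$ in $\mathbb{Z}$, at least one of $a,b$ is positive. Because $\varepsilon$, $\overline{\varepsilon}$, and $x$ are pairwise relatively prime and $N(\varepsilon)=N(\overline{\varepsilon})=2$, the multiplicativity of $I_2$ from Theorem \ref{Thm1.1}(b) gives
\[
2=I_2(z)=I_2(\varepsilon^a)\,I_2(\overline{\varepsilon}^b)\,I_2(x)=\frac{(2^{a+1}-1)(2^{b+1}-1)}{2^{a+b}}\,I_2(x).
\]
By Theorem \ref{Thm1.1}(a) we have $I_2(x)\geq 1$, so this forces $2^{a+b+1}\geq (2^{a+1}-1)(2^{b+1}-1)$, which rearranges to $2^{a+1}+2^{b+1}\geq 2^{a+b+1}+1$. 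The decisive observation is that this inequality is impossible once $a,b\geq 1$: in that range $2^{a+b+1}\geq 2^{a+1}+2^{b+1}$ (assuming $a\geq b$, one has $2^{a+b+1}\geq 2^{a+2}\geq 2^{a+1}+2^{b+1}$), so the right-hand side $2^{a+b+1}+1$ already strictly exceeds the left-hand side. Hence one of $a,b$ equals $0$, and since conjugation interchanges $\varepsilon$ and $\overline{\varepsilon}$ while fixing $N$ and $\delta_2$, the two resulting cases are symmetric; relabeling if necessary I may take $b=0$ and set $\gamma=a$, obtaining $z=\varepsilon^\gamma x$ with $\gamma\in\mathbb{N}$ and $2\nmid N(x)$ (the conjugate case giving $z=\overline{\varepsilon}^\gamma x$).

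From here the argument follows Theorem \ref{Thm2.1} verbatim. Using $\delta_2(\varepsilon^\gamma)=\sum_{j=0}^{\gamma}2^j=2^{\gamma+1}-1$ and $N(z)=2^\gamma N(x)$, the perfection identity $\delta_2(z)=2N(z)$ becomes $(2^{\gamma+1}-1)\delta_2(x)=2^{\gamma+1}N(x)$. Since $2^{\gamma+1}-1$ is odd, $2^{\gamma+1}\vert\delta_2(x)$ in $\mathbb{Z}$, so I write $\delta_2(x)=2^{\gamma+1}m$; substituting back yields $N(x)=(2^{\gamma+1}-1)m$, and $m$ is odd because $2\nmid N(x)$. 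To see that $q=2^{\gamma+1}-1$ is an inert Mersenne prime, I would reuse the factorization-and-divisor-bound argument of Theorem \ref{Thm2.1}: if $q$ were reducible in $\mathcal O_{\mathbb{Q}(\sqrt{-7})}$, then $q=y_1y_2$ with $1<N(y_1)\leq N(y_2)<q^2$ forces $N(y_1)\leq q$, and a prime $\pi$ dividing $x$ with $N(\pi)\leq q$ yields the proper nonunit divisor $x/\pi$ and the estimate $\delta_2(x)\geq 1+N(x)/N(\pi)+N(x)\geq 1+2^{\gamma+1}m$, contradicting $\delta_2(x)=2^{\gamma+1}m$. Thus $q$ is prime in the ring; being a rational integer, it is an inert integer prime (by Fact \ref{Fact1.2}) and therefore a Mersenne prime.

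The hard part is the very first step, namely ruling out simultaneous divisibility by $\varepsilon$ and $\overline{\varepsilon}$, since everything afterward is a faithful copy of Theorem \ref{Thm2.1}. The clean resolution is the abundancy estimate above: the product $I_2(\varepsilon^a)I_2(\overline{\varepsilon}^b)$ already exceeds $2$ as soon as $a,b\geq 1$, leaving no room for the factor $I_2(x)\geq 1$, so the two small primes above $2$ are simply too abundant to coexist inside a $2$-powerfully perfect number.
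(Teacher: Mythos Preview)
Your proposal is correct and follows essentially the same route as the paper's proof. The paper also writes $z=\varepsilon^{\gamma_1}\overline{\varepsilon}^{\gamma_2}x$, derives the same inequality $2^{\gamma_1+1}+2^{\gamma_2+1}>2^{\gamma_1+\gamma_2+1}+1$ (via $\delta_2(x)\geq 1+N(x)$ rather than your equivalent $I_2(x)\geq 1$) to force one exponent to vanish, and then transcribes the remainder from Theorem~\ref{Thm2.1} exactly as you do.
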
 
\begin{proof}
We know that we may write $z=\varepsilon^{\gamma_1}\overline{\varepsilon}^{\gamma_2}x$, where $\gamma_1$, $\gamma_2\in\mathbb{N}_0$, $x\in\mathcal O_{\mathbb{Q}(\sqrt{-7})}$, and $2\!\nmid\! N(x)$ in $\mathbb{Z}$. Furthermore, we know from the fact that $2\vert N(z)$ in $\mathbb{Z}$ that $\gamma_1$ and $\gamma_2$ are not both 
zero. We must prove that either $\gamma_1=0$ or $\gamma_2=0$. Then, after setting $\gamma=\gamma_1+\gamma_2$, we need to prove the final sentence of the theorem and the fact that $2^{\gamma+1}-1$ is a Mersenne prime that is inert in $\mathcal O_{\mathbb{Q}(\sqrt{-7})}$. 
\par  
As $z$ is $2$-powerfully perfect in $\mathcal O_{\mathbb{Q}(\sqrt{-7})}$, we have \[\delta_2(z)=2N(z)=2N(\varepsilon^{\gamma_1} )N(\overline{\varepsilon}^{\gamma_2})N(x)=2^{\gamma_1+\gamma_2+1}N(x).\]
However, we also have 
\[\delta_2(z)=\delta_2(\varepsilon^{\gamma_1})\delta_2(\overline{\varepsilon}^{\gamma_2})\delta_2(x)=\left(\sum_{j=0}^{\gamma_1}
N(\varepsilon^j)\right)\left(\sum_{j=0}^{\gamma_2}N(\overline{\varepsilon}^j)\right)\delta_2(x)\]
\[=\left(\sum_{j=0}^{\gamma_1}2^j\right)\left(\sum_{j=0}^{\gamma_2}2^j\right)\delta_2(x)=(2^{\gamma_1+1}-1)(2^{\gamma_2+1}-1)\delta_2(x).\] 
Therefore, $2^{\gamma_1+\gamma_2+1}
N(x)=(2^{\gamma_1+1}-1)(2^{\gamma_2+1}-1)\delta_2(x)$. As $(2^{\gamma_1+1}-1)(2^{\gamma_2+1}-1)$ is odd, we find that $2^{\gamma_1+\gamma_2+1}\vert\delta_2(x)$ in $\mathbb{Z}$. We may then write $\delta_2(x)=2^{\gamma_1+\gamma_2+1}m$ for 
some positive integer $m$. Substituting this new expression for $\delta_2(x)$ into the equation $2^{\gamma_1+\gamma_2+1} N(x)=(2^{\gamma_1+1}-1)(2^{\gamma_2+1}-1)\delta_2(x)$, we find $N(x)=(2^{\gamma_1+1}-1)(2^{\gamma_2+1}-1)m$. This tells us that $m$ is odd because $2\nmid N(x)$ in $\mathbb{Z}$. Now, $2^{\gamma_1+\gamma_2+1}m=\delta_2(x)\geq 1+N(x)=1+(2^{\gamma_1+1}-1)(2^{\gamma_2+1}-1)m$, so $2^{\gamma_1+\gamma_2+1}>(2^{\gamma_1+1}-1)(2^{\gamma_2+1}-1)=2\cdot2^{\gamma_1+\gamma_2+1}-2^{\gamma_1+1}-2^{\gamma_2+1}+1$. Simplifying this inequality, we have $2^{\gamma_1+1}+2^{\gamma_2+1}>2^{\gamma_1+\gamma_2+1}+1$, which is impossible unless $\gamma_1=0$ or $\gamma_2=0$. Therefore, either $z=\varepsilon^{\gamma_1}x$ or $z=\overline{\varepsilon}^{\gamma_2}x$. Either way, if we write $\gamma=\gamma_1+\gamma_2$, then we have $\delta_2(x)=2^{\gamma+1}m$ and $N(x)=(2^{\gamma+1}-1)m$. Suppose that $2^{\gamma+1}-1$ is not a prime in $\mathcal O_{\mathbb{Q}(\sqrt{-7})}$ so that we may write 
$2^{\gamma+1}-1=y_1y_2$, where $y_1,y_2\in\mathcal O_{\mathbb{Q}(\sqrt{-7})}$ satisfy $1<N(y_1)\leq N(y_2)<N(2^{\gamma+1}-1)=(2^{\gamma+1}-1)^2$. Then, because $N(y_1)N(y_2)=N(2^{\gamma+1}-1)=(2^{\gamma+1}-1)^2$, we see that $N(y_1)\leq 2^{\gamma+1}-1$. Now, let $\pi_0$ be a prime that divides $y_1$. Then $\pi_0\vert N(x)$, which implies that either $\pi_0\vert x$ or $\overline{\pi_0}\vert x$. If $\pi_0\vert x$, write $\pi=\pi_0$. Otherwise, write $\pi=\overline{\pi_0}$.   
Then $N(\pi)\leq N(y_1)\leq 2^{\gamma+1}-1$, and $\displaystyle{\frac{x}{\pi}}$ is a nonunit proper divisor of $x$. This implies that 
\[\delta_2(x)\geq 1+N\left(\frac{x}{\pi}\right)+N(x)=1+\frac{N(x)}{N(\pi)}+N(x)\]
\[=1+\frac{(2^{\gamma+1}-1)m}{N(\pi)}+(2^{\gamma+1}-1)m\geq 1+\frac{(2^{\gamma+1}-1)m}{2^{\gamma+1}-1}+(2^{\gamma+1}-1)m\]
\[=1+2^{\gamma+1}m.\] 
However, this contradicts the fact that $\delta_2(x)=2^{\gamma+1}m$, so we conclude that $2^{\gamma+1}-1$ is a prime in $\mathcal O_{\mathbb{Q}(\sqrt{-7})}$. Furthermore, because $2^{\gamma+1}-1$ is an integer, we conclude that $2^{\gamma+1}-1$ is an inert integer prime that is also a Mersenne prime. 
\end{proof}
\begin{theorem} \label{Thm2.4} 
Let $z$, $m$, $\gamma$, and $x$ be as in Theorem \ref{Thm2.3}. Write $q=2^{\gamma+1}-1$ and $m=q^kv$, where $k\in\mathbb{N}_0$, $v\in\mathbb{N}$, and $q\nmid v$ in $\mathbb{Z}$. Then $k$ is odd, $v\geq q+2$, $\gamma\equiv 1\imod{3}$, $q\equiv 3\imod{7}$, and
\[m\geq q^{k+1}+(q+3)\sum_{j=0}^{\frac{k-1}{2}}q^{2j}\geq q^2+q+3.\]
\end{theorem}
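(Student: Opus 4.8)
The plan is to separate the statement into two groups of conclusions. The claims that $k$ is odd, that $v\geq q+2$, and the chain of inequalities culminating in $m\geq q^2+q+3$ are established by an argument essentially identical to the proof of Theorem~\ref{Thm2.2}, since the data supplied by Theorem~\ref{Thm2.3} match those of Theorem~\ref{Thm2.1} verbatim: in both settings $q=2^{\gamma+1}-1$ is an inert Mersenne prime, $\delta_2(x)=(q+1)m$, and $N(x)=qm$. The genuinely new content is the pair of congruences $\gamma\equiv 1\imod 3$ and $q\equiv 3\imod 7$, and these I would extract from the single fact, already guaranteed by Theorem~\ref{Thm2.3}, that $q$ is \emph{inert} in $\mathcal O_{\mathbb{Q}(\sqrt{-7})}$.

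For the inherited conclusions, I would first invoke Fact~\ref{Fact1.3}: because $q$ is inert and $\upsilon_q(N(x))=k+1$, the exponent $k+1$ is even (so $k$ is odd) and $\rho_q(x)=\frac{k+1}{2}$. From here the argument runs exactly as before: one rules out $v=1$ by a congruence-mod-$q$ contradiction, sets $y=x/q^{(k+1)/2}$ so that $N(y)=v$ and $y$ is coprime to $q$, applies the multiplicativity of $\delta_2$ together with $\delta_2(y)\geq v+1$ to obtain an inequality analogous to~\eqref{Eq2.1}, deduces $v>q$ and hence (by parity, since $v$ and $q$ are both odd) $v\geq q+2$, and finally substitutes to reach the displayed bound. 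I expect this portion to be routine transcription rather than a source of difficulty.

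The substantive step is the two congruences. Here the plan is to apply Fact~\ref{Fact1.4} with $d=-7$: since $q$ is an odd prime with $q\neq 7$ and $q$ is inert, $-7$ must be a quadratic nonresidue modulo $q$, i.e. $\left(\frac{-7}{q}\right)=-1$. I would then compute $q\bmod 7$ by exploiting that $2^n\bmod 7$ has period $3$, which splits the analysis into the cases $\gamma\equiv 0,1,2\imod 3$, giving $q\equiv 1,3,0\imod 7$ respectively. The case $\gamma\equiv 2$ forces $7\mid q$, hence $q=7$ because $q$ is prime; but $7$ ramifies rather than being inert, a contradiction. To eliminate $\gamma\equiv 0$ and confirm $\gamma\equiv 1$, I would reduce the Legendre symbol: since $\gamma\geq 1$ gives $q\equiv 3\imod 4$, one has $\left(\frac{-1}{q}\right)=-1$, and quadratic reciprocity (with the sign coming from $q\equiv 3\imod 4$) yields $\left(\frac{7}{q}\right)=-\left(\frac{q}{7}\right)$, so that $\left(\frac{-7}{q}\right)=\left(\frac{q}{7}\right)$. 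Thus inertness is equivalent to $\left(\frac{q}{7}\right)=-1$, i.e. to $q$ being a nonresidue modulo $7$; as $q\equiv 1$ is a residue and $q\equiv 3$ is a nonresidue mod $7$, only $\gamma\equiv 1\imod 3$ survives, forcing $q\equiv 3\imod 7$.

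The main obstacle, such as it is, lies entirely in the Legendre-symbol bookkeeping of the last paragraph: one must correctly carry the $q\equiv 3\imod 4$ sign through both $\left(\frac{-1}{q}\right)$ and quadratic reciprocity in order to collapse $\left(\frac{-7}{q}\right)$ to $\left(\frac{q}{7}\right)$. Once that reduction is secured, matching it against the period-$3$ behavior of $2^n\bmod 7$ immediately isolates the single admissible residue class, and everything else in the theorem is the verbatim repetition of the Theorem~\ref{Thm2.2} computation.
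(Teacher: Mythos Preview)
Your proposal is correct and follows essentially the same strategy as the paper: reduce the bulk of the argument to a verbatim repetition of Theorem~\ref{Thm2.2}, and extract the two congruences from the inertness of $q$ in $\mathcal O_{\mathbb{Q}(\sqrt{-7})}$ via Fact~\ref{Fact1.4}. The only cosmetic difference is that the paper simply quotes the list of inert residue classes $\{3,5,6\}\pmod 7$ and intersects it with the possible values of $2^{\gamma+1}-1\pmod 7$, whereas you rederive that list through the Legendre-symbol identity $\left(\frac{-7}{q}\right)=\left(\frac{q}{7}\right)$; both routes are equivalent and yours merely unpacks a step the paper leaves implicit.
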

\begin{proof}
Fact \ref{Fact1.4} tells us that an integer prime is inert in $\mathcal O_{\mathbb{Q}(\sqrt{-7})}$ if and only if that integer prime is congruent to $3$, $5$, or $6$ modulo $7$. Also, it is easy to see that powers of $2$ cannot be congruent to $6$ or $7$ modulo $7$. Therefore, as $q$ is a Mersenne prime that is inert in $\mathcal O_{\mathbb{Q}(\sqrt{-7})}$, we must have $q\equiv 3\imod{7}$.  This implies that $2^{\gamma+1}\equiv 4\imod{7}$, so $\gamma\equiv 1\imod{3}$. The proof of the rest of the theorem is identical to the proof of Theorem \ref{Thm2.2}, except all references to Theorem \ref{Thm2.1} should be replaced with references to Theorem \ref{Thm2.3}.    
\end{proof} 
Within the rings $\mathcal O_{\mathbb{Q}(\sqrt{-1})}$, $\mathcal O_{\mathbb{Q}(\sqrt{-2})}$, and $\mathcal O_{\mathbb{Q}(\sqrt{-7})}$, Theorems \ref{Thm2.1} through \ref{Thm2.4} examine some properties of $2$-powerfully perfect numbers with even 
norms. These numbers are somewhat analogous to perfect numbers in $\mathbb{Z}$. The analogues of odd perfect numbers are then $2$-powerfully perfect numbers with odd norms. We now briefly explore some of the properties that such numbers would need to exhibit. 
\begin{theorem} \label{Thm2.5}
Let us work in a ring $\mathcal O_{\mathbb{Q}(\sqrt{d})}$ with $d\in K$. Suppose $z\in\mathcal O_{\mathbb{Q}(\sqrt{d})}\backslash\{0\}$ is such that $I_2(z)=2$ and $N(z)$ is odd (suppose such a $z$ exists). Then 
we may write $z\sim\pi^kx^2$, where  $\pi,x\in\mathcal O_{\mathbb{Q}(\sqrt{d})}\backslash\{0\}$, $\pi$ is prime, and $k\in\mathbb{N}$. Furthermore, $k\equiv N(\pi)\equiv 1\imod{4}$.
\end{theorem}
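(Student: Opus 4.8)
The plan is to transplant Euler's classical argument on the shape of an odd perfect number into this setting, using $\delta_2$ and the norm $N$ in place of $\sigma_1$ and the identity. First I would note that since $I_2(z)=2>1$, Theorem~\ref{Thm1.1}(a) guarantees that $z$ is not a unit, so we may write $z\sim\prod_i\pi_i^{a_i}$ as a product of pairwise non-associated primes $\pi_i\in A(d)$ with $a_i\in\mathbb{N}$. Using the multiplicativity of $\delta_2$ (Theorem~\ref{Thm1.1}(b)) together with $\delta_2(\pi_i^{a_i})=\sum_{j=0}^{a_i}N(\pi_i)^j$, I obtain $\delta_2(z)=\prod_i S_i$ where $S_i=\sum_{j=0}^{a_i}N(\pi_i)^j$. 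Because $I_2(z)=2$ gives $\delta_2(z)=2N(z)$ and $N(z)$ is odd, we have $\upsilon_2\!\left(\prod_i S_i\right)=1$; moreover each $N(\pi_i)$ is odd, since otherwise $N(z)=\prod_i N(\pi_i)^{a_i}$ would be even.

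The heart of the argument is a $\bmod\,4$ analysis of the $S_i$. Since each $N(\pi_i)$ is odd, $S_i$ is a sum of $a_i+1$ odd numbers, so $S_i\equiv a_i+1\pmod 2$, and thus $S_i$ is odd exactly when $a_i$ is even. As $\upsilon_2(\prod_i S_i)=1$, there is exactly one index $i_0$ with $S_{i_0}$ even, and that $S_{i_0}$ must satisfy $\upsilon_2(S_{i_0})=1$, i.e. $S_{i_0}\equiv2\pmod4$, while every other $S_i$ is odd. Setting $\pi=\pi_{i_0}$ and $k=a_{i_0}$, the oddness of the remaining $S_i$ forces each corresponding $a_i$ to be even, so $x:=\prod_{i\neq i_0}\pi_i^{a_i/2}$ satisfies $z\sim\pi^k x^2$; and the evenness of $S_{i_0}$ forces $k$ to be odd. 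This already yields the claimed factorization.

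It then remains to pin down $k$ and $N(\pi)$ modulo $4$, which I would do by evaluating $S_{i_0}=\sum_{j=0}^{k}N(\pi)^j\pmod 4$ with $k$ odd. If $N(\pi)\equiv3\pmod4$, then the even number $k+1$ of terms splits into $\frac{k+1}{2}$ terms $\equiv1$ and $\frac{k+1}{2}$ terms $\equiv3$, giving $S_{i_0}\equiv\frac{k+1}{2}(1+3)\equiv0\pmod4$, which contradicts $S_{i_0}\equiv2\pmod4$; hence $N(\pi)\equiv1\pmod4$. In that case every term is $\equiv1\pmod4$, so $S_{i_0}\equiv k+1\pmod4$, and $S_{i_0}\equiv2\pmod4$ forces $k\equiv1\pmod4$, completing the proof. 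I expect the only genuine subtlety to be the valuation bookkeeping in the middle step, namely arguing that $\upsilon_2(\prod_i S_i)=1$ isolates a single \emph{special} prime with $\upsilon_2(S_{i_0})=1$ rather than merely guaranteeing some even $S_{i_0}$; once that is secured, the two residue computations are routine.
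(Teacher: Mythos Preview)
Your proof is correct and follows essentially the same route as the paper's, both transplanting Euler's classical odd-perfect-number argument via $\delta_2(z)=2N(z)\equiv 2\pmod 4$ to isolate a unique prime power $\pi^k$ with $\delta_2(\pi^k)\equiv 2\pmod 4$. The only cosmetic difference is in the final $\bmod\,4$ computation: the paper splits into the cases $N(\pi)=q^2$ (inert) versus $N(\pi)=p$ (split or ramified), whereas you split directly on $N(\pi)\bmod 4$, which handles both cases uniformly.
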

\begin{proof}
First, let $\pi_0$ be a prime whose norm is odd, and let $\alpha$ be a positive integer. As $\displaystyle{\delta_2(\pi_0^\alpha)=\sum_{j=0}^\alpha N(\pi_0^j)=\sum_{j=0}^\alpha N(\pi_0)^j}$ and $N(\pi_0)$ is odd, we see that $\alpha$ and $\delta_2(\pi_0^\alpha)$ have opposite parities. 
\par 
Now, from $I_2(z)=2$, we have $\delta_2(z)=2N(z)$. 
Because $N(z)$ is odd, we find that $\delta_2(z)\equiv 2\imod{4}$. Write $\displaystyle{z=\prod_{j=1}^r\pi_j^{\alpha_j}}$, where, for all distinct $j,l\in\{1,2,\ldots,r\}$, $\pi_j$ is prime, $\alpha_j$ is a positive integer, and $\pi_j\not\sim \pi_l$. Then $\displaystyle{\delta_2(z)=\prod_{j=1}^r\delta_2(\pi_j^{\alpha_j})}$. Because $\delta_2(z)\equiv 2\imod{4}$, we find that 
there must be exactly one value of $j\in\{1,2,\ldots,r\}$ such that $\delta_2(\pi_j^{\alpha_j})$ is even. This means that there is exactly one value of $j\in\{1,2,\ldots,r\}$ such that $\alpha_j$ is odd. Therefore, $z\sim \pi^kx^2$, where  $\pi,x\in\mathcal O_{\mathbb{Q}(\sqrt{d})}$, $\pi$ is prime, and $k$ is an odd positive 
integer. Furthermore, $\delta_2(\pi^k)\equiv 2\imod{4}$. 
\par 
If $N(\pi)=q^2$, where $q$ is an inert integer prime, then \[\delta_2(\pi^k)=\sum_{l=0}^kN(\pi^l)=\sum_{l=0}^kq^{2l}\equiv\sum_{l=0}^k1\equiv k+1\imod{4}.\] Therefore, in this case, we have 
$k\equiv 1\imod{4}$. Also, because $N(\pi)=q^2$ and $q$ is odd, we know that $N(\pi)\equiv 1\imod{4}$. 
\par 
On the other hand, if $N(\pi)=p$ is an integer prime, then \[\delta_2(\pi^k)=\sum_{l=0}^kN(\pi^l)=\sum_{l=0}^kp^l\equiv 2\imod{4},\] which implies that $p\equiv k\equiv 1\imod{4}$. 
\end{proof} 
\begin{theorem} \label{Thm2.6}
Let us work in a ring $\mathcal O_{\mathbb{Q}(\sqrt{d})}$ with $d\in\{-1,-2\}$. Let $z\in\mathcal O_{\mathbb{Q}(\sqrt{d})}\backslash\{0\}$ be such that $I_2(z)=2$ and $N(z)$ is odd (suppose such a $z$ exists). Then $z$ has at least five nonassociated prime divisors. 
\end{theorem}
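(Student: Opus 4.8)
The plan is to combine the multiplicativity of $I_2$ (Theorem \ref{Thm1.1}(b)) with the elementary geometric-series bound
\[I_2(\pi^\alpha)=\frac{\sum_{l=0}^\alpha N(\pi)^l}{N(\pi)^\alpha}<\frac{N(\pi)}{N(\pi)-1},\]
valid for every prime $\pi$ and every $\alpha\in\mathbb{N}$. Writing $z\sim\prod_{j=1}^r\pi_j^{\alpha_j}$ with the $\pi_j$ pairwise nonassociated primes, this yields $2=I_2(z)=\prod_{j=1}^r I_2(\pi_j^{\alpha_j})<\prod_{j=1}^r\frac{N(\pi_j)}{N(\pi_j)-1}$. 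Since $N(z)$ is odd, each $N(\pi_j)$ is odd, so by Fact \ref{Fact1.2} the admissible prime norms are the odd split-prime values and the odd inert-prime squares. As $x\mapsto\frac{x}{x-1}$ is decreasing, the product is maximized by taking the smallest admissible norms; I would assume $r\leq 4$ and show that even this maximal product stays below $2$, a contradiction.

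For $d=-1$ the crude bound already suffices. In $\mathbb{Z}[i]$ the prime $3$ is inert (norm $9$), the prime $5$ splits into two nonassociated primes of norm $5$, and there is no odd prime norm strictly between $5$ and $9$; hence the four smallest admissible norms, counted with multiplicity, are $5,5,9,13$. An abundancy value of $2$ would then require $2<\left(\tfrac54\right)^2\tfrac98\tfrac{13}{12}$, but the right-hand side equals $\tfrac{2925}{1536}<2$, which is absurd. Thus $r\geq 5$.

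The delicate case, and where I expect the main obstacle, is $d=-2$. Here $3$ splits in $\mathbb{Z}[\sqrt{-2}]$ into two nonassociated primes of norm $3$, and since $\left(\tfrac32\right)^2>2$ the crude product bound is by itself powerless. To rescue it I would invoke Theorem \ref{Thm2.5}, which forces $z\sim\pi^kx^2$ with $\pi$ the unique prime occurring to an odd power and $N(\pi)\equiv 1\imod{4}$. A residue check shows that the odd prime norms in $\mathbb{Z}[\sqrt{-2}]$ that are congruent to $1$ modulo $4$ are precisely the inert squares $25,49,\dots$ and the split primes congruent to $1$ modulo $8$, namely $17,41,\dots$; hence $N(\pi)\geq 17$ and $I_2(\pi^k)<\tfrac{17}{16}$. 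Every other prime divisor then occurs to an even power, so a norm-$3$ divisor $\rho$ satisfies $I_2(\rho^{\alpha})\geq I_2(\rho^2)=\tfrac{13}{9}$ by Theorem \ref{Thm1.1}(d); consequently, were both conjugate norm-$3$ primes to divide $z$, we would already have $I_2(z)\geq\left(\tfrac{13}{9}\right)^2=\tfrac{169}{81}>2$, so at most one norm-$3$ prime can divide $z$.

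Assembling these facts under the assumption $r\leq 4$ finishes the argument: one of the at most four primes is $\pi$, contributing a factor below $\tfrac{17}{16}$, while the remaining at most three primes occur to even powers and have admissible norms $3$ (used at most once) followed by $11,17,19,\dots$, so their joint contribution is largest for the norms $3,11,11$ and is bounded by $\tfrac32\left(\tfrac{11}{10}\right)^2$. Multiplying gives $I_2(z)<\tfrac{17}{16}\cdot\tfrac32\cdot\left(\tfrac{11}{10}\right)^2=\tfrac{6171}{3200}<2$, contradicting $I_2(z)=2$, so $r\geq 5$ here as well. The entire difficulty is concentrated in the $d=-2$ step: without Theorem \ref{Thm2.5} pinning $N(\pi)\geq 17$ and without the exclusion of a second norm-$3$ prime, the small norm $3$ would keep the product above $2$ and the crude estimate would fail.
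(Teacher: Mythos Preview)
Your proof is correct and follows essentially the same approach as the paper: the $d=-1$ case is identical, and for $d=-2$ both you and the paper invoke Theorem~\ref{Thm2.5} to force even exponents on the norm-$3$ primes, exclude both of them via $(13/9)^2>2$, and then bound $I_2(z)$ by the same product $\tfrac{3}{2}\cdot\tfrac{11}{10}\cdot\tfrac{11}{10}\cdot\tfrac{17}{16}<2$. Your bookkeeping differs only in that you single out the special prime $\pi$ with $N(\pi)\geq 17$ explicitly, whereas the paper simply lists $3,11,11,17$ as the four smallest admissible norms once a second norm-$3$ prime is ruled out; the arguments are otherwise the same.
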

\begin{proof}
Suppose $z$ has four or fewer nonassociated prime divisors. Then we may write $z\sim\pi_1^{\alpha_1}\pi_2^{\alpha_2}\pi_3^{\alpha_3}\pi_4^{\alpha_4}$, where, for all distinct $j,l\in\{1,2,3,4\}$, $\pi_j$ is prime, $\alpha_j$ is a nonnegative integer, and $\pi_j\not\sim\pi_l$. 
\par 
First, let us deal with the case $d=-1$. In the ring $\mathcal O_{\mathbb{Q}(\sqrt{-1})}$, the five primes (up to units) that have the smallest odd norms are $2+i$, $1+2i$, $3$, $3+2i$, and $2+3i$, which have norms $5$, $5$, $9$, $13$, and $13$, respectively. Therefore, 
\[I_2(z)=I_2(\pi_1^{\alpha_1}\pi_2^{\alpha_2}\pi_3^{\alpha_3}\pi_4^{\alpha_4})\]
\[=\left(\sum_{j=0}^{\alpha_1}\frac{1}{N(\pi_1)^j}\right)\left(\sum_{j=0}^{\alpha_2}\frac{1}{N(\pi_2)^j}\right)\left(\sum_{j=0}^{\alpha_3}\frac{1}{N(\pi_3)^j}\right)\left(\sum_{j=0}^{\alpha_4}\frac{1}{N(\pi_4)^j}\right)\]
\[<\left(\sum_{j=0}^{\infty}\frac{1}{N(\pi_1)^j}\right)\left(\sum_{j=0}^{\infty}\frac{1}{N(\pi_2)^j}\right)\left(\sum_{j=0}^{\infty}\frac{1}{N(\pi_3)^j}\right)\left(\sum_{j=0}^{\infty}\frac{1}{N(\pi_4)^j}\right)\]
\[\leq\left(\sum_{j=0}^{\infty}\frac{1}{5^j}\right)\left(\sum_{j=0}^{\infty}\frac{1}{5^j}\right)\left(\sum_{j=0}^{\infty}\frac{1}{9^j}\right)\left(\sum_{j=0}^{\infty}\frac{1}{13^j}\right)=\frac{5}{4}\cdot\frac{5}{4}\cdot\frac{9}{8}\cdot\frac{13}{12}<2,\] which is a contradiction. 
\par 
Second, let us deal with the case $d=-2$. In the ring $\mathcal O_{\mathbb{Q}(\sqrt{-2})}$, the integer prime $3$ splits as $3=(1+\sqrt{-2})(1-\sqrt{-2})$. Suppose $1+\sqrt{-2}\vert z$ and $1-\sqrt{-2}\vert z$. Then, because $N(1+\sqrt{-2})=N(1-\sqrt{-2})=3\not\equiv 1\imod{4}$, Theorem \ref{Thm2.5} implies that $1+\sqrt{-2}$ and $1-\sqrt{-2}$ must both appear with even exponents in the prime factorization of $z$. In particular, $(1+\sqrt{-2})^2(1-\sqrt{-2})^2\vert z$. Therefore, by part $(d)$ of Theorem \ref{Thm2.2}, 
\[I_2(z)\geq I_2((1+\sqrt{-2})^2)I_2((1-\sqrt{-2})^2)=\left(1+\frac{1}{3}+\frac{1}{9}\right)^2>2,\]
which is a contradiction. This implies that $1+\sqrt{-2}$ and $1-\sqrt{-2}$ cannot both divide $z$. Now, the six primes (up to units) that have the smallest odd norms are $1+\sqrt{-2}$, $1-\sqrt{-2}$, $3+\sqrt{-2}$, $3-\sqrt{-2}$, $3+2\sqrt{-2}$, and $3-2\sqrt{-2}$, which have norms $3$, $3$, $11$, $11$, $17$, and $17$, respectively. Because $1+\sqrt{-2}$ and $1-\sqrt{-2}$ cannot both divide $z$, we have \[I_2(z)=I_2(\pi_1^{\alpha_1}\pi_2^{\alpha_2}\pi_3^{\alpha_3}\pi_4^{\alpha_4})\]
\[=\left(\sum_{j=0}^{\alpha_1}\frac{1}{N(\pi_1)^j}\right)\left(\sum_{j=0}^{\alpha_2}\frac{1}{N(\pi_2)^j}\right)\left(\sum_{j=0}^{\alpha_3}\frac{1}{N(\pi_3)^j}\right)\left(\sum_{j=0}^{\alpha_4}\frac{1}{N(\pi_4)^j}\right)\]
\[<\left(\sum_{j=0}^{\infty}\frac{1}{N(\pi_1)^j}\right)\left(\sum_{j=0}^{\infty}\frac{1}{N(\pi_2)^j}\right)\left(\sum_{j=0}^{\infty}\frac{1}{N(\pi_3)^j}\right)\left(\sum_{j=0}^{\infty}\frac{1}{N(\pi_4)^j}\right)\]
\[\leq\left(\sum_{j=0}^{\infty}\frac{1}{3^j}\right)\left(\sum_{j=0}^{\infty}\frac{1}{11^j}\right)\left(\sum_{j=0}^{\infty}\frac{1}{11^j}\right)\left(\sum_{j=0}^{\infty}\frac{1}{17^j}\right)=\frac{3}{2}\cdot\frac{11}{10}\cdot\frac{11}{10}\cdot\frac{17}{16}<2,\]
which is a contradiction. 
\end{proof}
\begin{theorem} \label{Thm2.7}
Let us work in the ring $\mathcal O_{\mathbb{Q}(\sqrt{-7})}$. Let $z\in\mathcal O_{\mathbb{Q}(\sqrt{-7})}\backslash\{0\}$ be such that $I_2(z)=2$ and $N(z)$ is odd (suppose such a $z$ exists). Then $z$ has at least eleven nonassociated prime divisors.
\end{theorem}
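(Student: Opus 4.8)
The plan is to mirror the proof of Theorem \ref{Thm2.6}, in particular its $d=-1$ case, where a direct upper bound on $I_2$ already falls below $2$. First I would assume, for contradiction, that $z$ has at most ten nonassociated prime divisors, and write $z\sim\prod_{i=1}^r\pi_i^{\alpha_i}$ with $r\leq 10$ and the $\pi_i$ pairwise nonassociated primes. Since $N(z)$ is odd and $N$ is multiplicative, every $\pi_i$ has odd norm; in particular the norm-$2$ primes $\varepsilon,\overline\varepsilon$ do not divide $z$, so only primes of odd norm are relevant.

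The essential preliminary step is to catalogue, via Fact \ref{Fact1.4}, the nonassociated primes of smallest odd norm. An odd integer prime $p$ is inert when $p\equiv 3,5,6\imod{7}$ (contributing one prime of norm $p^2$), splits when $p\equiv 1,2,4\imod{7}$ (contributing two nonassociated primes of norm $p$), and ramifies only for $p=7$ (contributing one prime of norm $7$). Working through the small primes gives the ten smallest odd prime norms, with multiplicity, as $7,9,11,11,23,23,25,29,29,37$, coming respectively from $\sqrt{-7}$, $3$, the two norm-$11$ primes, the two norm-$23$ primes, $5$, the two norm-$29$ primes, and one of the two norm-$37$ primes; the eleventh smallest norm is again $37$.

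I would then bound $I_2(z)$ from above exactly as in Theorem \ref{Thm2.6}: by multiplicativity and $I_2(\pi^\alpha)=\sum_{j=0}^{\alpha}N(\pi)^{-j}<\frac{N(\pi)}{N(\pi)-1}$, we get $I_2(z)<\prod_{i=1}^r\frac{N(\pi_i)}{N(\pi_i)-1}$. Because $t\mapsto\frac{t}{t-1}$ is decreasing and the $i$-th smallest norm in any family of distinct primes is at least the $i$-th entry of the global sorted list of odd prime norms, this product is at most $\frac{7}{6}\cdot\frac{9}{8}\cdot\left(\frac{11}{10}\right)^2\cdot\left(\frac{23}{22}\right)^2\cdot\frac{25}{24}\cdot\left(\frac{29}{28}\right)^2\cdot\frac{37}{36}$. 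Computing this yields a value of approximately $1.9934$, which is strictly less than $2$ and contradicts $I_2(z)=2$.

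The main obstacle is the final numerical estimate, which is delicate: the ten-prime bound lands just under $2$ (about $1.9934$), while appending the eleventh norm-$37$ factor raises it above $2$. This near-equality is exactly why the threshold is eleven, and it forces the enumeration of small-norm primes to be complete and accurate, since a single missed or miscounted split prime would alter the list and could break the bound. Note that, unlike the $d=-2$ case of Theorem \ref{Thm2.6}, the crude product bound already dips below $2$ here, so the parity refinement of Theorem \ref{Thm2.5} (all but one prime occurring to an even power) is not needed for the contradiction.
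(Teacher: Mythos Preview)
Your proposal is correct and follows essentially the same approach as the paper: assume at most ten nonassociated prime divisors, enumerate the ten smallest odd prime norms in $\mathcal O_{\mathbb{Q}(\sqrt{-7})}$ as $7,9,11,11,23,23,25,29,29,37$, bound each factor by the geometric-series limit $\tfrac{N(\pi)}{N(\pi)-1}$, and verify that the resulting product is strictly below $2$. Your additional commentary about the tightness of the bound and the irrelevance of the Theorem~\ref{Thm2.5} refinement is accurate but not needed for the argument.
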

\begin{proof}
Suppose $z$ has ten or fewer nonassociated prime divisors. Then we may write $\displaystyle{z\sim\prod_{m=1}^{10}\pi_m^{\alpha_m}}$, where, for all distinct $m,l\in\{1,2,\dots,10\}$, $\pi_m$ is prime, $\alpha_m$ is a nonnegative integer, and $\pi_m\not\sim\pi_l$. In $\mathcal O_{\mathbb{Q}(\sqrt{-7})}$, the eleven primes (up to units) that have the smallest odd norms are $\sqrt{-7}$, $3$, $2+\sqrt{-7}$, $2-\sqrt{-7}$, $4+\sqrt{-7}$, $4-\sqrt{-7}$, $5$, $1+2\sqrt{-7}$, $1-2\sqrt{-7}$, $3+2\sqrt{-7}$, and $3-2\sqrt{-7}$, which have norms $7$, $9$, $11$, $11$, $23$, $23$, $25$, $29$, $29$, $37$, and $37$, respectively. Therefore, 
\[I_2(z)=\prod_{m=1}^{10}I_2(\pi_m^{\alpha_m})=\prod_{m=1}^{10}\left(\sum_{j=0}^{\alpha_m}\frac{1}{N(\pi_m)^j}\right)<\prod_{m=1}^{10}\left(\sum_{j=0}^{\infty}\frac{1}{N(\pi_m)^j}\right)\] 
\[\leq \left(\sum_{j=0}^{\infty}\frac{1}{7^j}\right)\left(\sum_{j=0}^{\infty}\frac{1}{9^j}\right)\left(\sum_{j=0}^{\infty}\frac{1}{11^j}\right)\left(\sum_{j=0}^{\infty}\frac{1}{11^j}\right)\left(\sum_{j=0}^{\infty}\frac{1}{23^j}\right)\left(\sum_{j=0}^{\infty}\frac{1}{23^j}\right)\] 
\[\cdot\left(\sum_{j=0}^{\infty}\frac{1}{25^j}\right)\left(\sum_{j=0}^{\infty}\frac{1}{29^j}\right)\left(\sum_{j=0}^{\infty}\frac{1}{29^j}\right)\left(\sum_{j=0}^{\infty}\frac{1}{37^j}\right)\] 
\[=\frac{7}{6}\cdot\frac{9}{8}\cdot\frac{11}{10}\cdot\frac{11}{10}\cdot\frac{23}{22}\cdot\frac{23}{22}\cdot\frac{25}{24}\cdot\frac{29}{28}\cdot\frac{29}{28}\cdot\frac{37}{36}<2,\] 
which is a contradiction.
\end{proof} 
We conclude this section with a remark about $2$-powerfully perfect numbers in $\mathcal O_{\mathbb{Q}(\sqrt{-1})}$, $\mathcal O_{\mathbb{Q}(\sqrt{-2})}$, and $\mathcal O_{\mathbb{Q}(\sqrt{-7})}$ that have odd norms. In each of these three rings, there 
is a prime, say $\xi$, with norm $2$. If $d\in\{-1,-2,-7\}$, $z\in\mathcal O_{\mathbb{Q}(\sqrt{d})}$, $I_2(z)=2$, and $N(z)$ is odd, then $\xi z$ is $2$-powerfully $3$-perfect in $\mathcal O_{\mathbb{Q}(\sqrt{d})}$. This is simply because, under these assumptions, we find that $\displaystyle{I_2(\xi z)=I_2(\xi)I_2(z)=\frac{1+2}{2}I_2(z)=\frac{3}{2}\cdot 2=3}$. 
\section{Further Ideas and a Conjecture} 
We admit that we directed almost all of our attention toward $2$-powerfully perfect numbers, rather than the more general $2$-powerfully $t$-perfect numbers. Hence, the subject of $2$-powerfully $t$-perfect numbers awaits exploration. We also concentrated so heavily on the rings $\mathcal O_{\mathbb{Q}(\sqrt{-1})}$, $\mathcal O_{\mathbb{Q}(\sqrt{-2})}$, and $\mathcal O_{\mathbb{Q}(\sqrt{-7})}$ when dealing with $2$-powerfully perfect numbers that we left open all questions about the rings $\mathcal O_{\mathbb{Q}(\sqrt{d})}$ with $d\!\in\! K$ in which $2$ is inert. We mentioned that $3+9i$  and $9+3i$ are $2$-powerfully perfect and that $30+30i$ is $2$-powerfully $3$-perfect in $\mathcal O_{\mathbb{Q}(\sqrt{-1})}$. Andrew Lelechenko has observed that $84+4788i$ and $1764+4452i$ are also $2$-powerfully $3$-perfect in this ring. Are there other $2$-powerfully $t$-perfect numbers in this ring? What about in other rings? 
\par 
Referring to the concluding paragraph of Section 2, we might ask if there are other relationships between different types of $n$-powerfully $t$-perfect numbers. More specifically, in a given ring $\mathcal O_{\mathbb{Q}(\sqrt{d})}$, are there certain criteria which would guarantee that some specific multiple of an $n_1$-powerfully $t_1$-perfect number is $n_2$-powerfully $t_2$-perfect (for some $n_1,n_2,t_1,t_2\in\mathbb{N}$ with $t_1,t_2\geq 2$)?   
\begin{conjecture} \label{Conj3.1} 
The value of $k$ in Theorem \ref{Thm2.2} must be $1$. Similarly, if there is a $2$-powerfully perfect number in $\mathcal O_{\mathbb{Q}(\sqrt{-7})}$, then the value of $k$ in Theorem \ref{Thm2.4} must be $1$.  
\end{conjecture}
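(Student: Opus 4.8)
The plan is to handle both assertions simultaneously, since the argument for $\mathcal O_{\mathbb{Q}(\sqrt{-7})}$ in Theorem \ref{Thm2.4} already reduces to exactly the same equation as the one governing Theorem \ref{Thm2.2}. Keeping the notation of Theorem \ref{Thm2.2}, write $q=2^{\gamma+1}-1$ and recall that $N(x)=qm$, $\delta_2(x)=(q+1)m$, and $m=q^kv$ with $q\nmid v$ and $k$ odd. By Fact \ref{Fact1.3}, $\rho_q(x)=\frac{k+1}{2}$, so we may write $x\sim q^{(k+1)/2}y$ with $q\nmid N(y)$ and $N(y)=v$. Putting $S=\sum_{j=0}^{(k+1)/2}q^{2j}$ and using the multiplicativity of $\delta_2$, the condition $I_2(z)=2$ collapses to the single relation
\begin{equation}\label{EqStar}
S\,\delta_2(y)=(q+1)\,q^k\,v,\qquad v=N(y),\quad q\nmid v.
\end{equation}
The conjecture is then equivalent to the claim that \eqref{EqStar} admits no solution with $k\geq 3$.

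From \eqref{EqStar} I would first extract two rigid constraints. Since $S\equiv 1\imod{q}$ we have $\gcd(S,q)=1$, and because $q\nmid(q+1)$ and $q\nmid v$, comparing $q$-adic valuations in \eqref{EqStar} gives the exact statement $\upsilon_q(\delta_2(y))=k$; in particular $S\mid(q+1)v$. Next, Theorem \ref{Thm1.1}(c) gives $I_2(x)=\delta_{-2}(x)=\sum_{w\vert x}N(w)^{-1}=1+\frac1q$, and isolating the contribution $1$ of the unit divisors leaves $\sum_{w\vert x,\,w\text{ nonunit}}N(w)^{-1}=\frac1q$. Retaining only the prime divisors of $x$ (namely $q$, of norm $q^2$, together with the primes dividing $y$) yields
\[\sum_{\pi\vert y}\frac{1}{N(\pi)}\leq\frac1q-\frac1{q^2}=\frac{q-1}{q^2}.\]
Thus every prime $\pi\vert y$ has $N(\pi)>q$, and $y$ may have only a few prime factors, each of relatively large norm.

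The strategy is then to play the exact divisibility $\upsilon_q(\delta_2(y))=k$ against this size bound. Writing $\delta_2(y)=\prod_{\pi\vert y}\delta_2(\pi^{\beta_\pi})$ with $\delta_2(\pi^{\beta_\pi})=\frac{N(\pi)^{\beta_\pi+1}-1}{N(\pi)-1}$, a factor of $q$ can appear only when the multiplicative order $e$ of $N(\pi)$ modulo $q$ divides $\beta_\pi+1$ and exceeds $1$, and then a lifting-the-exponent computation controls $\upsilon_q(\delta_2(\pi^{\beta_\pi}))$ in terms of $\upsilon_q(N(\pi)^e-1)$ and $\upsilon_q((\beta_\pi+1)/e)$. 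To reach the first forbidden value $k=3$ one must therefore either spread the three factors of $q$ across several primes $\pi$ (each then contributing a summand $1/N(\pi)$ that the displayed inequality forbids once they are too numerous or too small) or concentrate them in a single prime with large $\beta_\pi$ (which inflates $N(y)=v$ and strains the tightness already recorded in \eqref{Eq2.1}). I would organize the argument as a case analysis on the number of distinct prime factors of $y$, ruling out $k\geq 3$ in each case.

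The step I expect to be the real obstacle is precisely this last reconciliation of divisibility with size. Nothing prevents a single prime $\pi$ whose norm has large multiplicative order modulo $q^k$ from feeding a high power of $q$ into $\delta_2(y)$ while keeping $N(\pi)$ large enough to respect the abundancy bound, so no argument resting on magnitudes alone can close the gap; the operative conditions are of the same arithmetic character as those that obstruct the study of odd perfect numbers. A complete proof therefore seems to require a genuinely new inequality linking $\upsilon_q(\delta_2(y))$ to $N(y)$ beyond what \eqref{Eq2.1} supplies, or else a finite computation eliminating the residual configurations for each admissible Mersenne prime $q$ — which is why the statement is offered as a conjecture rather than a theorem.
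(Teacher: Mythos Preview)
The statement is Conjecture~\ref{Conj3.1}, and the paper offers no proof of it; the only supporting remark is the sentence after Theorem~\ref{Thm2.2} noting that the inequalities there make $k=1$ ``seem likely.'' There is therefore nothing in the paper to compare your argument against.

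Your proposal is not a proof either, and you say as much in your final paragraph. The preparatory reductions are correct: from Theorems~\ref{Thm2.1}--\ref{Thm2.4} one has $N(x)=qm$, $\delta_2(x)=(q+1)m$, $x\sim q^{(k+1)/2}y$ with $N(y)=v$ and $q\nmid v$, and multiplicativity collapses the problem to $S\,\delta_2(y)=(q+1)q^kv$ with $S=\sum_{j=0}^{(k+1)/2}q^{2j}$. Your extraction of $\upsilon_q(\delta_2(y))=k$ and of the bound $\sum_{\pi\mid y}N(\pi)^{-1}\le (q-1)/q^2$ from $I_2(x)=(q+1)/q$ is also valid. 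But the strategy you then sketch---playing the exact $q$-adic valuation of $\delta_2(y)$ against the abundancy bound via a case split on the prime shape of $y$---remains heuristic, and you correctly identify the obstruction yourself: a single prime $\pi\mid y$ of large norm with suitable multiplicative order modulo $q$ can in principle supply $\upsilon_q(\delta_2(\pi^{\beta_\pi}))\ge 3$ without violating any of the size constraints, so neither inequality alone forces $k=1$.

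In short, your write-up is an honest and technically sound discussion of why the conjecture is plausible and where a proof would have to bite, but it is not a proof, and neither is anything in the paper. Both you and the author leave the statement open.
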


\section{Acknowledgments} 
The author would like to thank Professor Pete Johnson for inviting him to the 2014 REU Program in Algebra and Discrete Mathematics at Auburn University.


\begin{thebibliography}{9}

\bibitem{Conrad} 
Conrad, Keith. Factoring in Quadratic Fields. Available at \\ 
http://www.math.uconn.edu/$\sim$kconrad/blurbs/ugradnumthy \\ 
/quadraticundergrad.pdf. 

\bibitem{Defant14A}
Defant, Colin. An extension of the abundancy index to certain quadratic rings, Submitted (2014). 

\bibitem{Defant14B}
Defant, Colin. Multiperfect numbers in certain quadratic rings, Submitted (2014). 

\bibitem{Stark67}
Stark, H. M. A complete determination of the complex quadratic fields of class-number one. Michigan Math. J. 14 1967 1--27.

\end{thebibliography}
\end{document}